\documentclass{amsart}
\usepackage{amsfonts,amssymb,amscd,amsmath,enumerate,verbatim,calc,amsthm,mathrsfs,amsxtra}
\usepackage[all,2cell,ps]{xy}
\usepackage{enumerate,verbatim}
\usepackage[pagebackref]{hyperref} 

\newcommand{\fg}{finitely generated }

\newcommand{\ff}{\text{if and only if}}
\newcommand{\wrt}{with respect to}

\newcommand{\m}{\mathfrak{m} }

\newcommand{\N}{\mathcal{N} }

\newcommand{\p}{\mathfrak{p} }

\newcommand{\R}{\mathcal{R} }

\newcommand{\Z}{\mathbb{Z} }

\newcommand{\grade}{\operatorname{grade}}
\newcommand{\depth}{\operatorname{depth}}

\newcommand{\projdim}{\operatorname{projdim}}

\newcommand{\Ass}{\operatorname{Ass}}
\newcommand{\Hom}{\operatorname{Hom}}
\newcommand{\Ext}{\operatorname{Ext}}
\newcommand{\Tor}{\operatorname{Tor}}

\theoremstyle{plain}

\newtheorem{theorem}{Theorem}[section]
\newtheorem{corollary}[theorem]{Corollary}
\newtheorem{lemma}[theorem]{Lemma}

\newtheorem{some new invariant}[theorem]{Some new invariant}
\theoremstyle{definition}

\newtheorem{remark}[theorem]{Remark}

\newtheorem{note}[theorem]{Note}
\newtheorem{setup}[theorem]{Set-up}
\theoremstyle{remark}

\numberwithin{equation}{theorem}

\begin{document}

\title[Asymptotic depth]{Asymptotic depth of Ext modules over complete intersection rings}

\author{Provanjan Mallick}
\address{Department of Mathematics, Indian Institute of Technology Bombay, Powai, Mumbai 400076, India}
\email{prov786@math.iitb.ac.in}

\author{Tony J. Puthenpurakal}
\address{Department of Mathematics, Indian Institute of Technology Bombay, Powai, Mumbai 400076, India}
\email{tputhen@math.iitb.ac.in}

\date{\today}


\begin{abstract}
	Let $(A,\m)$ be a local complete intersection ring and let $I$ be an ideal in $A$. Let $M, N$ be finitely generated
	$A$-modules. Then for $l = 0,1$, the values
$\depth \Ext^{2i+l}_A(M, N/I^nN)$ become independent of $i, n$ for $i,n \gg 0$.
We also show that if $\p$ is a prime ideal in $A$ then the $j^{th}$ Bass numbers
$\mu_j\big(\p,\Ext^{2i+l}_A(M,N/{I^nN})\big)$ has polynomial growth in $(n,i)$ with rational coefficients for all sufficiently large $(n,i)$.

\end{abstract}
\maketitle
\section{Introduction}
Let $A$ be a Noetherian ring and let $I$ be an ideal in $A$. Let $M$ be a finitely generated $A$-module. By a result of
Brodmann \cite{B79-1} the set $\Ass_A (M/I^nM)$ and the set
$\Ass_A( I^nM/I^{n+1}M)$ are eventually stable. Later \cite{B79-2} he showed that if $J$ is an ideal in $A$ then
$\grade(J, M/I^nM)$ and $\grade(J, I^nM/I^{n+1}M)$ are eventually constant.  Recently in a far-reaching generalization 
T. Se \cite{Se17}   showed that if $F$ is a \emph{coherent} covariant $A$-linear functor  from the category of finitely  $A$-modules to itself the sets $F(M/I^nM), F(I^nM/I^{n+1}M)$  and the values 
$\grade(J, F(M/I^nM))$ and $\grade(J, F(I^nM/I^{n+1}M))$ become independent of $n$ for large $n$. 
We note that if $N$ is a finitely generated $A$-module then the functors $\Tor^A_i(N,-)$ and $\Ext^i_A(N,-)$ are
coherent for all $i \geq 0$, cf. \cite{Se17}.

For the rest of the introduction let us assume that $(A,\m)$ is a local complete intersection.
In \cite{Put13} the second author proved that if  $\N = \bigoplus_{i \geq 0} \N_n$ is any finitely generated module over $A[It]$, the Rees-algebra of $I$, then for $l = 0,1$  the sets
$\Ass_A \big(\Ext^{2i+l}_A(M, \N_n)\big)$ are stable for large $i,n$. We note two significant cases are when $\N = \bigoplus_{n \geq 0}I^nN$ and when $\N = \bigoplus_{n \geq 0}I^nN/I^{n+1}N$ for a finitely generated $A$-module $N$.
Later the second author (with D. Ghosh) \cite{GP} proved that if $N$ is a finitely generated 
$A$-module then for $l = 0,1$  the sets
$\Ass_A\big( \Ext^{2i+l}_A(M, N/I^nN)\big)$ are stable for large $i,n$.
In view of earlier results a natural question is that if $J$ is an ideal in $A$ then, for $l = 0,1$, are the values
\begin{enumerate}
\item
$\grade\big(J, \Ext^{2i+l}_A(M, \N_n)\big)$ where $\N = \bigoplus_{n \geq 0}\N_n$ is a finitely generated module over $A[It]$
\item
$\grade\big(J, \Ext^{2i+l}_A(M, N/I^nN)\big)$ where $N$ is a finitely generated $A$-module
\end{enumerate}
 independent of $i,n$ for $i,n \gg 0$. The first question has an affirmative answer (see Corollary 3.3). For the second question we prove
\begin{theorem}\label{main-tjp-prov}
Let $(A,\m)$ be a local complete intersection ring and let $I$ be an ideal in $A$. Let $M, N$ be finitely generated $A$-modules. Then for $l = 0,1$ the values
$\depth\big( \Ext^{2i+l}_A(M, N/I^nN)\big)$ become independent of $i, n$ for $i,n \gg 0$.
\end{theorem}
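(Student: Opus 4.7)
The plan is to deduce Theorem~\ref{main-tjp-prov} from Corollary~3.3 (the Rees-module analog) by an exact-sequence argument combined with the bigraded structure coming from the Eisenbud operators over a complete intersection. First I would use the short exact sequence
\[
0 \to I^n N \to N \to N/I^n N \to 0
\]
and the long exact sequence in $\Ext^*_A(M, -)$ to produce, for each $l \in \{0,1\}$, short exact sequences
\[
0 \to K^{(l)}_{i,n} \to \Ext^{2i+l}_A(M, N/I^n N) \to L^{(l)}_{i,n} \to 0,
\]
with $K^{(l)}_{i,n}$ a quotient of $\Ext^{2i+l}_A(M,N)$ and $L^{(l)}_{i,n}$ a submodule of $\Ext^{2i+l+1}_A(M, I^n N)$.

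Next, the bigraded module $\bigoplus_{i,n} \Ext^{2i+l}_A(M, I^n N)$ carries a natural finitely generated structure over $A[\chi_1,\ldots,\chi_c] \otimes_A \mathcal{R}(I)$, combining the Eisenbud cohomological operators with the Rees algebra $\mathcal{R}(I)=A[It]$; similarly $\bigoplus_i \Ext^{2i+l}_A(M, N)$ is finitely generated over $A[\chi_1,\ldots,\chi_c]$. Applying Corollary~3.3 with $J = \m$ then gives that $\depth_A \Ext^{2i+l}_A(M, I^n N)$ and $\depth_A \Ext^{2i+l}_A(M, N)$ are eventually constant in $(i,n)$ and in $i$ respectively; by the same argument $\depth_A K^{(l)}_{i,n}$ and $\depth_A L^{(l)}_{i,n}$ are eventually constant in $(i,n)$.

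The remaining step is to transfer depth stability through the short exact sequence to the middle term. Since depth is not well-behaved in short exact sequences, I would pass to Bass numbers. The plan is to show that for each fixed $j \geq 0$ the function
\[
(i,n) \mapsto \mu^j\bigl(\m,\, \Ext^{2i+l}_A(M, N/I^n N)\bigr)
\]
is eventually polynomial in $(i,n)$; this is essentially the second assertion of the abstract. Combined with the non-negativity and integrality of Bass numbers and a cofinality argument, one obtains the dichotomy that for each $j$ either $\mu^j \equiv 0$ eventually or $\mu^j > 0$ on a suitable cofinal subset of $(i,n)$. Since $\depth_A E = \min\{j : \mu^j(\m, E) \neq 0\}$, this yields the theorem.

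The main obstacle is establishing the polynomial growth of the Bass numbers. The key technical ingredient is a bigraded finitely generated module structure on
\[
\bigoplus_{i,n \geq 0} \Ext^j_A\bigl(A/\m,\; \Ext^{2i+l}_A(M, N/I^n N)\bigr)
\]
over a suitable bigraded Noetherian ring assembled from $A[\chi_1,\ldots,\chi_c]$ and the Rees data. The subtlety is that $\bigoplus_n N/I^n N$ is not naturally a Rees module, so one plays off the associated graded modules $\bigoplus_n I^n N/I^{n+1}N$ (to which the results of \cite{Put13} apply directly) against the short exact sequences $0 \to I^n N/I^{n+1}N \to N/I^{n+1}N \to N/I^n N \to 0$, and proceeds by devissage in both indices.
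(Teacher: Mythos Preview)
Your overall strategy---reduce to a dichotomy for each Bass number $\mu_j\bigl(\m,\Ext^{2i+l}_A(M,N/I^nN)\bigr)$ and then read off the depth as the least $j$ with nonvanishing Bass number---matches the paper's. The gap is in how you obtain the dichotomy. You propose to deduce it from the polynomial growth of the Bass numbers together with ``non-negativity, integrality, and a cofinality argument''. This does not work: a polynomial $f(n,i)$ with rational coefficients taking non-negative integer values for all large $(n,i)$ can vanish on an unbounded set without being identically zero (for instance $f(n,i)=(n-i)^2$), so knowing that $\mu_j$ is eventually polynomial gives neither ``$\mu_j=0$ for all large $(n,i)$'' nor ``$\mu_j>0$ for all large $(n,i)$'', and positivity on a merely cofinal set is not enough to pin down the depth everywhere. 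The paper makes exactly this point (see the remark following Theorem~\ref{growth-bass} in the introduction and the remark after Theorem~\ref{t2}): Theorem~\ref{main-tjp-prov} does \emph{not} follow from Theorem~\ref{growth-bass}, precisely because $\bigoplus_{n,i}\Ext^l_A\bigl(k,\Ext^{2i+t}_A(M,N/I^nN)\bigr)$ is not finitely generated over any bigraded Noetherian ring.

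The paper's actual proof of the dichotomy (Theorem~\ref{t3}) uses different tools. Starting, as you suggest at the end, from $0\to I^nN/I^{n+1}N\to N/I^{n+1}N\to N/I^nN\to 0$, one obtains after applying $\Ext^*_A(M,-)$ and then $\Hom_A(k,-)$ a four-term exact sequence relating $\Ext^l_A(k,V)(1,0)$ to $\Ext^l_A(k,V)$ with \emph{finitely generated} bigraded error terms, where $V=\bigoplus_{n,i}\Ext^{2i}_A(M,N/I^nN)$. Vanishing of high graded pieces of $H^i_{\mathcal{T}_{++}}$ on finitely generated modules (Theorem~\ref{jk1}) then forces $H^0_{\mathcal{T}_{++}}\bigl(\Ext^l_A(k,V)\bigr)_{(n+1,i)}\cong H^0_{\mathcal{T}_{++}}\bigl(\Ext^l_A(k,V)\bigr)_{(n,i)}$ for $(n,i)\gg 0$; combined with the one-variable stability of Lemma~\ref{ll2} this gives asymptotic stability of $\Ass_A$ on the $\mathcal{T}_{++}$-torsion part. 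One then invokes the Katz--Puthenpurakal criterion (Theorem~\ref{sirkatz}) for not-necessarily-finitely-generated bigraded modules with finite $\Ass_A$ to transfer this to $\Ext^l_A(k,V)$ itself. Since $\Ass_A\bigl(\Ext^l_A(k,V)_{(n,i)}\bigr)$ is either $\{\m\}$ or empty, the vanishing/nonvanishing dichotomy follows. The local-cohomology comparison and the appeal to Theorem~\ref{sirkatz} are the missing ingredients in your proposal; the first half of your outline (Corollary~3.3 applied to $K^{(l)}$ and $L^{(l)}$, and then ``transfer through the short exact sequence'') is not used and, as you yourself note, cannot succeed because depth does not behave well in short exact sequences.
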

Our techniques do not work for arbitrary ideals of $A$.

Finally  we may enquire about growth of Bass numbers of $\Ext^{2i+l}_A(M, N/I^nN)$
as $i,n$ vary. If $\p$ is a prime ideal in $A$ and $E$ is an $A$-module then let $\mu_j(\p, E)$ denote the $j^{th}$ Bass number 
of $E$ \wrt \ $\p$.
We prove
\begin{theorem}\label{growth-bass}
Let $(A,\m)$ be a local complete intersection ring and let $I$ be an ideal in $A$.
For any prime ideal $\p$ of $A$ and for every fixed $l=0,1$ we have 
$\mu_j\big(\p,\Ext^{2i+l}_A(M,N/{I^nN})\big)$ has polynomial growth in $(n,i)$ with rational coefficients for all sufficiently large $(n,i)$.
\end{theorem}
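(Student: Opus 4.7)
The plan is to reduce the problem to a bigraded Hilbert polynomial statement, in the spirit of Gulliksen's classical reduction. Let $\chi_1,\dots,\chi_c$ be the Eisenbud/Avramov--Sun cohomology operators acting on $\Ext^*_A(M,-)$, each of cohomological degree $2$, and set $\Sc := A[\chi_1,\dots,\chi_c]$, $\R := A[It]$ the Rees algebra, and $T := \R \otimes_A \Sc$. Then $T$ is a standard bigraded Noetherian $A$-algebra in which each $\chi_j$ has bidegree $(1,0)$ and the generators $It$ have bidegree $(0,1)$. The central claim is the following lemma: for any finitely generated bigraded $T$-module $E = \bigoplus_{i,n \geq 0} E_{i,n}$, any prime $\p$ of $A$, and any $j \geq 0$, the function $(i,n) \mapsto \mu_j(\p, E_{i,n})$ agrees with a polynomial in $(i,n)$ of rational coefficients for all $(i,n) \gg 0$.

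To prove the lemma I would take a minimal $A_\p$-free resolution $F_\bullet \to k(\p)$, note that each $\Hom_{A_\p}(F_k, E_\p) \cong E_\p^{r_k}$ is a finitely generated bigraded module over the Noetherian ring $T \otimes_A A_\p$, and deduce that $\Ext^j_{A_\p}(k(\p), E_\p)$ is a finitely generated bigraded $T \otimes_A A_\p$-module. Being annihilated by $\p A_\p$, it is in fact a finitely generated bigraded module over $T \otimes_A k(\p)$, a finitely generated bigraded $k(\p)$-algebra, so its bigraded Hilbert function is eventually polynomial in $(i,n)$ with rational coefficients. Since $k(\p)$ admits a resolution by finitely generated projectives over the Noetherian ring $A_\p$, the functor $\Ext^j_{A_\p}(k(\p),-)$ commutes with arbitrary direct sums, so each bigraded component of this Ext equals $\Ext^j_{A_\p}(k(\p), (E_{i,n})_\p)$, whose $k(\p)$-dimension is exactly $\mu_j(\p, E_{i,n})$.

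The next step applies the lemma to $U^l := \bigoplus_{i,n \geq 0} \Ext^{2i+l}_A(M, I^nN)$. The Rees module $\bigoplus_n I^nN$ is finitely generated bigraded over $\R$, and combining this with the cohomology-operator construction of \cite{Put13} shows that $U^l$ is a finitely generated bigraded $T$-module; by the lemma, $\mu_j(\p, \Ext^{2i+l}_A(M, I^nN))$ is eventually polynomial in $(i,n)$. To transfer to $N/I^nN$ I would use the long exact sequence of $\Ext^*_A(M,-)$ coming from $0 \to I^nN \to N \to N/I^nN \to 0$ and then apply $\Ext^*_{A_\p}(k(\p),-)$. The coboundary maps in the resulting double long exact sequence are bigraded $T \otimes_A k(\p)$-linear maps between finitely generated bigraded modules, and therefore have finitely generated bigraded images, kernels, and cokernels. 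Their Hilbert functions are thus eventually polynomial in $(i,n)$, and $\mu_j\bigl(\p, \Ext^{2i+l}_A(M, N/I^nN)\bigr)$ emerges as a $\Z$-linear combination of such polynomials.

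The main obstacle I anticipate lies in this transfer step, because the naive module $\bigoplus_n \Ext^{2i+l}_A(M, N)$ is not finitely generated over $\R$ with the shift-by-$I$ structure needed to feed the long exact sequence into the lemma. The resolution will be an Artin--Rees/Kodiyalam-style observation: the image of $\Ext^{2i+l}_A(M, I^nN) \to \Ext^{2i+l}_A(M, N)$ eventually lies inside $I^{n-c}\Ext^{2i+l}_A(M, N)$, so one may replace $\bigoplus_n \Ext^{2i+l}_A(M, N)$ by its Rees submodule $\bigoplus_n I^n\Ext^{2i+l}_A(M, N)$, which is finitely generated bigraded over $T$ because $\bigoplus_i \Ext^{2i+l}_A(M, N)$ is finitely generated over $\Sc$ by Gulliksen's theorem. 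Once this bookkeeping is carried out, the kernels and cokernels arising from the long exact sequence live inside finitely generated bigraded $T$-modules, the lemma applies to each, and polynomial growth of rational coefficients follows.
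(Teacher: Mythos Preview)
Your overall architecture matches the paper's: use the Eisenbud operators and the Rees algebra to get a bigraded structure, apply Hilbert--Serre to the finitely generated pieces, and assemble via the long exact sequence coming from $0\to I^nN\to N\to N/I^nN\to 0$. Your lemma on Bass numbers of a finitely generated bigraded $T$-module is fine and is essentially what the paper does after localizing at $\p$ and completing.

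The genuine gap is exactly where you flag it, but your proposed Artin--Rees fix does not close it. Write $E_i=\Ext^{2i+l}_A(M,N)$, $B=\bigoplus_{n,i}E_i$ (constant in $n$), $X=\image\bigl(\bigoplus\Ext^{2i+l}_A(M,I^nN)\to B\bigr)$, and $Y=B/X$. Two problems arise. First, the containment $X_{n,i}\subseteq I^{n-c}E_i$ would have to hold with a single $c$ independent of $i$; this is a uniform Artin--Rees statement across infinitely many modules and is not a consequence of the standard lemma or of Kodiyalam's results. Second, even granting it, you obtain only that $Y$ is a quotient of $\bigoplus_{n,i}E_i/I^nE_i$, and this bigraded module is \emph{not} finitely generated over $T$ (already for $E_i=A$ the $\R(I)$-module $\bigoplus_n A/I^n$ needs a new generator in every degree). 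So your lemma cannot be applied to $Y$, and the bookkeeping you sketch does not terminate.

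The paper bypasses this entirely with a much simpler observation: $B_{n,i}=E_i$ does not depend on $n$. Hence $\lambda_A\bigl(\Ext^l_A(k,B_{n,i})\bigr)$ is constant in $n$, and by Gulliksen's theorem $\bigoplus_i\Ext^l_A(k,E_i)$ is finitely generated over $A[t_1,\dots,t_c]$, so the one-variable series $\sum_i\lambda_A(\Ext^l_A(k,E_i))z^i$ is rational with denominator $(1-z^2)^c$. Multiplying by $\sum_n\omega^n=1/(1-\omega)$ gives the bivariate series for $B$ in the required form. The four ``boundary'' pieces $C_1,\dots,C_4$ are genuine subquotients of $\Ext^*_A(k,X)$ and $\Ext^*_A(k,Z)$, which \emph{are} finitely generated over $\mathcal{T}$ because $X$ and $Z$ are; additivity of length along the two four-term exact sequences then produces the rational series for $\Ext^l_A(k,V)$. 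No Artin--Rees is needed; the only new idea beyond your lemma is to treat the constant-in-$n$ term by hand rather than force it into the finitely-generated framework.
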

\begin{remark}
We note that Theorem \ref{main-tjp-prov} does not follow from Theorem \ref{growth-bass} as $\bigoplus_{i,n}\Ext^{i}_A(M, N/I^nN)$ is not finitely generated over some bigraded ring.
\end{remark}

We now describe in brief the contents of this article. In section two we show that the modules under consideration have 
a natural structure of a bigraded module over an appropriate bigraded ring. In section three we prove some preliminary results.
In section four we prove our main result Theorem \ref{main-tjp-prov}. Finally in section five we prove Theorem \ref{growth-bass}.
\section{Module Structure}
In this section we give the module structures which we are going to use in order to prove our main results. 

Let $Q$ be a commutative Noetherian ring and $\boldsymbol{f} = f_1, \ldots, f_c $ a $Q$-regular sequence. Set $A=Q/{(\boldsymbol{f})}$. Let $M$ and $D$ be two $A$-modules, where $M$ is a finitely generated $A$-module. We assume $\projdim_Q(M)< \infty$. We will not change $M$ throughout our discussion.
\s \label{m1}

Let  $\mathbb{F}$ : $\cdots\longrightarrow F_n\longrightarrow \cdots\longrightarrow F_1\longrightarrow F_0\longrightarrow 0$ be a projective resolution of $M$ by finitely generated free $A$-modules. Let $t_j : \mathbb{F}(+2)\longrightarrow \mathbb{F},~~ 1\leq j\leq c$ be the Eisenbud operators defined by $\boldsymbol{f} = f_1, \ldots, f_c $ (see[\cite{Eis80}, section 1]). By [\cite{Eis80}, 1.4] the maps $t_j$ are determined unique upto homotopy . In particular, they induce well defined maps
\[t_j : \Ext^i_A(M,D)\longrightarrow \Ext^{i+2}_A(M,D)  ~ \text{for all }i ~\text{and} ~1\leq j\leq c.\]
The maps $t_j (j=1,\ldots, c)$ commutes upto homotopy. Thus 
\[\Ext^*_A(M,D)=\bigoplus _{i\geq 0}\Ext^i_A(M,D)\] 
turns into a graded $\mathbf{T}:=A[t_1,\ldots, t_c]$-module, where $\mathbf{T}$ is a graded polynomial ring over $A$ in the homology operators $t_j$ defined by $\boldsymbol{f}$ with degree of each $t_j$ is $2$. Furthermore the structure depends only on $\boldsymbol{f}$, are natural in both module arguments and commute with the connecting maps induced by short exact sequences.

\begin{note}\label{no1}
~$ \big(\Ext^*_A(M,D)\big)_{even}=\bigoplus _{i\geq 0}\Ext^{2i}_A(M,D)$ and $ \big(\Ext^*_A(M,D)\big)_{odd}=\\ \bigoplus _{i\geq 0}\Ext^{2i+1}_A(M,D)$ are graded $\mathbf{T}:=A[t_1,\ldots, t_c]$-submodule of $\Ext^*_A(M,D)$. So when we are considering $ \big(\Ext^*_A(M,D)\big)_{even}$ or  $ \big(\Ext^*_A(M,D)\big)_{odd}$ as $\mathbf{T}$-module we can think it as $\mathbf{T}^*$-module, where $\mathbf{T}^*$ is same as $\mathbf{T}$  but the degree of each $t_j$ in $\mathbf{T}^*$ is $1$. Furthermore as $\mathbf{T}^*$-module deg$\big( \Ext^{2i+t}_A(M,D)\big)=i$ for $t=0,1$. 
\end{note}

\s\label{gg}
If $\projdim_Q(M)$ finite and $D$ is finitely generated $A$-module, then Gulliksen [\cite{G},3.1] proved that $\Ext^*_A(M,D)$ is finitely generated $\mathbf{T}:=A[t_1,\ldots, t_c]$-module.

\begin{note}\label{no2}
 In that case we will have $ \big(\Ext^i_A(M,D)\big)_{even}$ and $ \big(\Ext^i_A(M,D)\big)_{odd}$ are finitely generated $\mathbf{T}^*:=A[t_1,\ldots, t_c]$-modules.
\end{note}

\begin{setup}\label{1}
Let $Q$ be a Noetherian ring of finite Krull dimension, and let $\boldsymbol{f} = f_1, \ldots, f_c $ a $Q$-regular sequence. Set $A=Q/{(\boldsymbol{f})}$.
\end{setup}

\s 
 Along with Set-up~\ref{1}, let $\mathbb{F}$ be a projective resolution of $M$ by finitely generated free $A$-modules and let $t_j : \mathbb{F}(+2)\longrightarrow \mathbb{F},~~ 1\leq j\leq c$ be the Eisenbud operators as in \ref{m1}. Let $I$ be an ideal of $A$. Set $\R(I) :=\bigoplus_{n\geq 0} I^nt^n$, the Rees ring of A with respect to I. Let $\mathcal{N} = \bigoplus_{n\geq 0}N_n$ be a $\R(I)$-module. Let $a\in I^s$. Consider $u = at^s\in\R(I)_s$. The map 
 \[N_n\xrightarrow{u} N_{n+s}\]
yields a commutative diagram
\[
\xymatrix
{
\Hom(\mathbb{F}, N_n) \ar@{->}[d]^{u}\ar@{->}[r]_{t_j}
&\Hom(\mathbb{F}, N_n)(+2) \ar@{->}[d]^{u}
\\
\Hom(\mathbb{F}, N_{n+s}) \ar@{->}[r]_{t_j}
&\Hom(\mathbb{F}, N_{n+s})(+2).
}
\]

Taking homology gives that $\mathcal{E}(\mathcal{N}): =\bigoplus_{i\geq 0}\bigoplus_{n\geq 0}\Ext^i_A(M,N_n)$ is a bigraded $\mathcal{T}=\R(I)[t_1,\ldots, t_c]$-module, where degree of $t_j$ is $(0,2)$ for $1\leq j\leq c$ and degree of $ut^n$ is $(n,0)$ for $u\in I^n$ and $n\in \Z$.
Now let us recall the following result from [\cite{Put13}, 1.1].

\begin{theorem}\label{t1}
	Along with Set-up~\ref{1}, suppose $M$ be a finitely generated $A$-module with $\projdim_Q(M)<\infty$. Let $I$ be an ideal of $A$, and let $\mathcal{N} = \bigoplus_{n\geq 0}N_n$ be a finitely generated $\R(I)$-module. Then 
	\[\mathcal{E}(\mathcal{N}): =\bigoplus_{i\geq 0}\bigoplus_{n\geq 0}\Ext^i_A(M,N_n)\] 
	is a finitely generated bigraded $\mathcal{T}=\R(I)[t_1,\ldots, t_c]$-module.
\end{theorem}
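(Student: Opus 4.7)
The proof naturally splits into two parts: establishing the bigraded $\mathcal{T}$-module structure on $\mathcal{E}(\mathcal{N})$, and proving finite generation.

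For the structure, fix a projective resolution $\mathbb{F}$ of $M$ by finitely generated free $A$-modules equipped with Eisenbud operators $t_1,\ldots,t_c$. For any $a\in I^s$, the multiplication $N_n\xrightarrow{a} N_{n+s}$ induces a cochain map $\Hom_A(\mathbb{F},N_n)\to \Hom_A(\mathbb{F},N_{n+s})$, and the diagram displayed just before the theorem shows that it commutes strictly with each $t_j$ (the $t_j$ act on $\mathbb{F}$, whereas the $\R(I)$-action is on the second argument, so the commutativity is just functoriality). Passing to cohomology yields a well-defined bigraded action of $\mathcal{T}$ on $\mathcal{E}(\mathcal{N})$ with $t_j$ of bidegree $(0,2)$ and $at^n$ of bidegree $(n,0)$.

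The substantive step is finite generation, and I would follow Gulliksen's original strategy, made bigraded throughout. Let $p=\projdim_Q M$ and fix a finite free resolution $\mathbb{G}\to M$ of length $p$ over $Q$. Using the Shamash construction on $\mathbb{G}$ and the regular sequence $\boldsymbol{f}$, build an explicit projective resolution $\mathbb{F}$ of $M$ over $A$; in this concrete model the Eisenbud operators $t_j$ appear as the obvious structure maps coming from the $f_j$. Since $\mathcal{N}$ is a finitely generated $\R(I)$-module and each $G_i$ is finitely generated free over $Q$, each $\Hom_Q(G_i,\mathcal{N})$ is a finite direct sum of shifts of $\mathcal{N}$, hence a finitely generated bigraded $\R(I)$-module; as $\mathbb{G}$ has length $p$, the bigraded $\R(I)$-module $\bigoplus_{i,n}\Ext^i_Q(M,N_n)$ is finitely generated. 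Gulliksen's comparison --- which in the Shamash model becomes an explicit $\R(I)$-linear surjection from a free $\mathcal{T}$-module built on generators of $\Hom_Q(\mathbb{G},\mathcal{N})$ onto the complex $\Hom_A(\mathbb{F},\mathcal{N})$ --- then transfers finite generation from $\Ext^*_Q$ to $\mathcal{E}(\mathcal{N})$ as a bigraded $\mathcal{T}$-module.

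The main obstacle is arranging this change-of-rings comparison $\R(I)$-equivariantly, i.e., verifying that the Shamash resolution, the filtration by Ext-degree, and the comparison map all carry a compatible $\R(I)$-grading coming from $\mathcal{N}$. Once this bookkeeping is in place, finite generation of $\mathcal{E}(\mathcal{N})$ over $\mathcal{T}$ follows because $\mathcal{T}=\R(I)[t_1,\ldots,t_c]$ is Noetherian (as $Q$, and hence $\R(I)$, is).
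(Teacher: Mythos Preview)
The paper does not prove this theorem: it is explicitly \emph{recalled} from \cite[1.1]{Put13}, with no argument given beyond the preceding discussion establishing the bigraded $\mathcal{T}$-module structure (the commutative diagram you also invoke). So there is no in-paper proof to compare against; your task was, in effect, to reconstruct the argument of the cited source.

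Your sketch is a reasonable outline of the standard route and is essentially correct in spirit. The bigraded module structure is indeed immediate from functoriality, as you say. For finite generation, the Shamash/Gulliksen approach you describe is the natural one: build $\mathbb{F}$ from a finite free $Q$-resolution $\mathbb{G}$ of $M$, so that $\Hom_A(\mathbb{F},D)$ is, as a graded $A$-module, a polynomial extension of the bounded complex $\Hom_Q(\mathbb{G},D)$; then cohomology is generated over $\mathbf{T}$ by the finitely many $\Ext^i_Q(M,D)$. Replacing $D$ by the graded $\R(I)$-module $\mathcal{N}$ is harmless precisely because, as you note, the $\R(I)$-action is on the second argument and the Eisenbud operators act on the first, so all constructions are automatically $\R(I)$-linear; the ``main obstacle'' you flag is therefore genuinely just bookkeeping. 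One point to tighten: your phrase ``an explicit $\R(I)$-linear surjection from a free $\mathcal{T}$-module \ldots\ onto the complex $\Hom_A(\mathbb{F},\mathcal{N})$'' is imprecise---what Gulliksen's argument actually gives is a spectral sequence (or an exhaustive filtration) whose $E_2$-page is $\Ext^*_Q(M,\mathcal{N})\otimes_A A[t_1,\ldots,t_c]$, a finitely generated $\mathcal{T}$-module, whence the abutment $\mathcal{E}(\mathcal{N})$ is finitely generated by Noetherianity of $\mathcal{T}$. With that correction your outline matches the argument in \cite{Put13}.
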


\begin{note}\label{no3}
$ \big(\mathcal{E}(\mathcal{N})\big)_{even}=\bigoplus_{i\geq 0}\bigoplus_{n\geq 0}\Ext^{2i}_A(M,N_n)$ and~ $ \big(\mathcal{E}(\mathcal{N})\big)_{odd}=\\ \bigoplus_{i\geq 0}\bigoplus_{n\geq 0}\Ext^{2i+1}_A(M,N_n)$ are finitely generated bigraded $\mathcal{T}^*=\R(I)[t_1,\ldots, t_c]$-module, where $\mathcal{T}^*$ is same as $\mathcal{T}$  but the degree of each $t_j$ in $\mathcal{T}^*$ is $(0,1)$.
\end{note}

 \section{Some Preliminary results}
 \s 
 Throughout this section we are assuming $\mathbf{T}, \mathbf{T}^*$ as in  Note~\ref{no1}~ and $\mathcal{T}, \mathcal{T}^*$ as in  Note~\ref{no3}. Let $I$ be an ideal of $A$. Set $\R(I) :=\bigoplus_{n\geq 0} I^nt^n$, the Rees ring of A with respect to I.

 \begin{theorem}\label{last1}
 Along with Set-up~\ref{1}, suppose $M$ be a finitely generated $A$-module with $\projdim_Q(M)<\infty$.	Let $\mathcal{N} = \bigoplus_{n\geq 0}N_n$ be a finitely generated $\R(I)$-module. Let $J$ be an ideal of $A$. Then we have $\grade\big(J,\Ext^{2i+t}_A(M,N_n)\big)$ is constant for every fixed $t=0,1$ and for all $n,i\gg 0$.
 \end{theorem}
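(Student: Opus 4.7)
The plan is to compute $\grade(J,-)$ from the vanishing pattern of $\Ext^r_A(A/J,-)$, package all these Exts across $(n,i)$ into a single finitely generated bigraded $\mathcal{T}^*$-module for each $r$, and then exploit eventual stability of associated primes in the bigraded sense. I treat the case $t=0$; the case $t=1$ is identical by Note~\ref{no3}. First I would set $\mathcal{E}:=\bigl(\mathcal{E}(\mathcal{N})\bigr)_{\text{even}}$, which is a finitely generated bigraded $\mathcal{T}^*$-module by Theorem~\ref{t1} and Note~\ref{no3}, with bigraded piece $\mathcal{E}_{(n,i)}=\Ext^{2i}_A(M,N_n)$. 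I would then pick a resolution $F_\bullet\rightarrow A/J$ by finitely generated free $A$-modules. Each $\Hom_A(F_j,\mathcal{E})$ is a finite direct sum of copies of $\mathcal{E}$, hence a finitely generated bigraded $\mathcal{T}^*$-module with $(n,i)$ piece $\Hom_A(F_j,\mathcal{E}_{(n,i)})$. The differentials come from $A$-linear maps among the $F_j$'s, and since $A=\mathcal{T}^*_{(0,0)}$ sits centrally in $\mathcal{T}^*$, they are $\mathcal{T}^*$-linear of bidegree $(0,0)$. Consequently
\[
\mathcal{F}^r := H^r\bigl(\Hom_A(F_\bullet,\mathcal{E})\bigr)
\]
is a finitely generated bigraded $\mathcal{T}^*$-module whose $(n,i)$ piece is $\Ext^r_A(A/J,\mathcal{E}_{(n,i)})$.

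Next I would invoke the bigraded stability of associated primes. The argument of \cite{Put13} depends only on the finitely generated bigraded $\mathcal{T}^*$-module structure, so $\Ass_A(\mathcal{F}^r_{(n,i)})$ is eventually constant in $(n,i)$ for every fixed $r$. Since each $\mathcal{F}^r_{(n,i)}$ is a finitely generated $A$-module, it vanishes if and only if its set of associated primes is empty. This yields the dichotomy: for each fixed $r$, either $\Ext^r_A(A/J,\mathcal{E}_{(n,i)})=0$ for all $(n,i)\gg 0$, or $\Ext^r_A(A/J,\mathcal{E}_{(n,i)})\neq 0$ for all $(n,i)\gg 0$.

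Finally, since $\dim A\leq\dim Q<\infty$ by Set-up~\ref{1}, $\grade(J,\mathcal{E}_{(n,i)})$ is either $\infty$ or an element of $\{0,1,\ldots,\dim A\}$. Applying the dichotomy to each of the finitely many $r\in\{0,1,\ldots,\dim A\}$ and taking the maximum of the stabilization thresholds, I obtain a fixed subset $S\subseteq\{0,1,\ldots,\dim A\}$ such that $\Ext^r_A(A/J,\mathcal{E}_{(n,i)})\neq 0 \iff r\in S$ for all $(n,i)\gg 0$. Hence $\grade\bigl(J,\mathcal{E}_{(n,i)}\bigr)=\min S$ (with the convention $\min\emptyset=\infty$) is constant for $(n,i)\gg 0$, as required. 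The step I expect to require the most care is establishing the bigraded stability of $\Ass_A(\mathcal{F}^r_{(n,i)})$ for the auxiliary modules $\mathcal{F}^r$, rather than only for the concrete Ext families treated in \cite{Put13}; the task is to verify that the graded primary decomposition argument in that paper uses only the bigraded $\mathcal{T}^*$-module structure and not the specific form of the module.
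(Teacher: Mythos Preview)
Your proposal is correct and follows essentially the same route as the paper: reduce to the vanishing dichotomy for $\Ext^r_A(A/J,\mathcal{E}_{(n,i)})$ via eventual stability of associated primes of a finitely generated bigraded module, then take the maximum over $0\le r\le\dim A$. The only notable difference is your closing concern: the paper sidesteps it entirely by citing \cite[Proposition~5.1]{West}, which gives asymptotic stability of $\Ass_A$ for \emph{any} finitely generated multigraded module over a Noetherian multigraded ring, so no special structure of $\mathcal{F}^r$ beyond finite generation over $\mathcal{T}$ (or $\mathcal{T}^*$) is needed.
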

 \begin{proof}
 	We prove the Theorem for $t=0$ only. For $t=1$ the proof is similer.
 	
 	It is well known that for fixed $n,i$ 
 	\begin{equation*}
 	\grade\big(J,\Ext^{2i}_A(M,N_n)\big) = \min\{ l : \Ext^l_A\big( A/J,\Ext^{2i}_A(M,N_n)\big)  \neq 0 \}.
 	\end{equation*} 
 	Set $\mathcal{E}=\bigoplus_{i\geq 0}\bigoplus_{n\geq 0}\Ext^i_A(M,N_n)$ and fix $l$. By virtue of~\ref{t1} $\mathcal{E}$ is finitely generated bigraded $\mathcal{T}=\R(I)[t_1,\ldots, t_c]$-module. So for fixed $l$ we have $\Ext^l_A(A/J,\mathcal{E})$ is finitely generated bigraded $\mathcal{T}$-module.
 	Hence by [\cite{West}, Proposition 5.1], there exist $(n_l,i_l)\in \mathbb{N}^2_0$ such that 
 	 $$\Ass_A\big(\Ext^l_A(A/J,\mathcal{E}_{(n,2i)})\big)=\Ass_A\big(\Ext^l_A(A/J,\mathcal{E}_{(n_l,2i_l)})\big)~\text{for all}~ (n,i)\geq (n_l,i_l),$$ 
 	where $\mathbb{N}_0$ denotes the set of non-negative integers.  
 	For each fixed $(n,i)\in \mathbb{N}^2_0$ and for each $u\geq 0$  set
 	$$D^{u}_{(n,2i)}:=\Ext^u_A\big( A/J,\mathcal{E}_{(n,2i)}) \big).$$

 	Hence we have for each fixed $u$ there exist $(n_u,i_u)\in \mathbb{N}^2_0$ such that
 	\begin{equation}\label{aaz}
 	\text{either}~D^{u}_{(n,2i)}=0~\text{ for all }~(n,i)\geq (n_u,i_u)~\text{or }~ D^{u}_{(n,2i)}\neq0~\text{for all}~(n,i)\geq (n_u,i_u).
 	\end{equation}

 	Let $(\alpha,\beta)$ be the maximimum of such $(n_u,i_u)$ for $0\leq u \leq \dim(A)$. Let $ \gamma= \min\{l|D^{l}_{(\alpha,\beta)}\neq0\}$. Hence we have $\grade\big(J,\Ext^{2i}_A(M,N_n)\big)=\gamma$ for all  $(n,i)\geq (\alpha,\beta)$.	
 \end{proof}

\begin{corollary}
 Let $(A,\m)$ be a local complete intersection ring. Let $M$ be a finitely generated $A$-module, let $J$ be an ideal of $A$. Let $\mathcal{N} = \bigoplus_{n\geq 0}N_n$ be a finitely generated graded $\R(I)$-module. Then for fixed $t=0,1$ we have $\grade\big(J,\Ext^{2i+t}_A(M,\mathcal{N}_n)\big)$ is constant for all $n,i\gg 0$.
\end{corollary}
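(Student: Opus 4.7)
The plan is to reduce to Theorem~\ref{last1} by passing to the $\m$-adic completion $\widehat{A}$ of $A$. Since $(A,\m)$ is a local complete intersection, Cohen's structure theorem yields a presentation $\widehat{A}=Q/(\bF)$ in which $Q$ is a regular local ring and $\bF=f_1,\dots,f_c$ is a $Q$-regular sequence. In particular $Q$ is Noetherian of finite Krull dimension, so $(Q,\bF)$ satisfies Set-up~\ref{1} with $\widehat{A}$ in place of $A$.

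Set $\widehat{M}:=M\otimes_A\widehat{A}$ and $\widehat{N}_n:=N_n\otimes_A\widehat{A}$ for each $n\geq 0$. Because $Q$ is regular, every finitely generated $Q$-module has finite projective dimension, so $\projdim_Q(\widehat{M})<\infty$. Flatness of the completion map gives $\R(I)\otimes_A\widehat{A}\cong \R(I\widehat{A})$, and hence $\widehat{\mathcal{N}}:=\mathcal{N}\otimes_A\widehat{A}=\bigoplus_{n\geq 0}\widehat{N}_n$ is a finitely generated graded $\R(I\widehat{A})$-module. Applying Theorem~\ref{last1} over $\widehat{A}$ with the ideal $J\widehat{A}$, I obtain, for each fixed $t\in\{0,1\}$, a constant $\gamma$ such that
\[
\grade\bigl(J\widehat{A},\ \Ext^{2i+t}_{\widehat{A}}(\widehat{M},\widehat{N}_n)\bigr)=\gamma \quad\text{for all } n,i\gg 0.
\]

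To descend back to $A$ I would use two standard facts. First, since $M$ admits a resolution by finitely generated free $A$-modules and $\widehat{A}$ is $A$-flat, there is a natural isomorphism
\[
\Ext^{2i+t}_{\widehat{A}}(\widehat{M},\widehat{N}_n)\ \cong\ \Ext^{2i+t}_A(M,N_n)\otimes_A\widehat{A}.
\]
Second, for any finitely generated $A$-module $E$, faithful flatness of $\widehat{A}$ over $A$ forces $\Ext^i_A(A/J,E)=0$ if and only if $\Ext^i_{\widehat{A}}(\widehat{A}/J\widehat{A},\,E\otimes_A\widehat{A})=0$, whence $\grade(J,E)=\grade(J\widehat{A},\,E\otimes_A\widehat{A})$. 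Applying this with $E=\Ext^{2i+t}_A(M,N_n)$, which is finitely generated since $M$ is, transfers the constancy over $\widehat{A}$ to the desired statement over $A$. The only point requiring any care is checking that the bigraded Rees-module structure on $\mathcal{N}$ base changes correctly so that Theorem~\ref{last1} is genuinely applicable over $\widehat{A}$; once this is verified, the rest is routine flat base change for Ext and grade.
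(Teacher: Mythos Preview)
Your argument is correct and is essentially the same as the paper's: both reduce to Theorem~\ref{last1} by completing at $\m$, using Cohen's structure theorem to write $\widehat{A}=Q/(\bF)$ with $Q$ regular local, and then invoking flat base change to identify $\Ext$ and $\grade$ over $A$ with their analogues over $\widehat{A}$. The only difference is cosmetic: the paper simply cites the identity $\grade(J,E)=\grade(\widehat{J},\widehat{E})$ as well known, whereas you spell it out via vanishing of $\Ext^i_A(A/J,-)$ and faithful flatness.
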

\begin{proof}
	It is well known that for a \fg $A$-module $E$ and for a ideal $J$ of $A$
	\begin{equation}\label{i13}
	\grade(J,E)=\grade(\hat{J},\hat{E}),
	\end{equation}
	where $\hat{J}$ and $\hat{E}$ are completion of $J$ and $E$ respectively. Since $\hat{A}$ is flat over $A$ we have for each fixed $n,i\geq0$ and for each fixed $t=0,1$ $$\widehat{\Ext^{2i+t}_A(M,\mathcal{N}_n)}=\Ext^{2i+t}_{\hat{A}}(\hat{M},\hat{\mathcal{N}_n}).$$

	So we may assume $A$ is complete. So there exist a regular local ring $Q$ with $A=Q/{(f)}$ and $f=f_1,\ldots,f_{c}$ is a $Q$-regular sequence. Also we will have $\projdim_{Q}(M)<\infty$, since $M$ is a \fg $A$-module and so as $Q$-module. Now the Corollary follows from Theorem~\ref{last1}.
\end{proof}

We will need the next result in the following section.
\begin{lemma}\label{ll2}
 Along with Set-up~\ref{1}, further assume $Q$ is a local ring with residue field $k$. Let $M$ be a finitely generated $A$-module with $\projdim_Q(M)<\infty$. Let $S_t=\bigoplus_{i\geq 0}(S_t)_i= \Ext^l_A\big(k,\bigoplus_{i\geq 0}\Ext^{2i+t}_A(M,N/I^nN)\big)$, where $n$ and $l$ are two fixed integers. Then for fixed $t=0,1$ there exist $u\in \mathbb{N}$ such that $\Ass_A\big((S_t)_j\big)=\Ass_A\big((S_t)_u\big)$ for all  $j\geq u$. Furthermore for fixed $t=0,1$ either $\Ass_A((S_t)_j)=\m$ for all $j\geq u$ or empty for all $j\geq u$. Here $\m$ is the unique maximal ideal of $A$.
\end{lemma}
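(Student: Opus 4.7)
The plan is to exploit the graded $\mathbf{T}^*$-module structure recalled in Notes \ref{no1}--\ref{no2} and then reduce the question to a standard Hilbert function argument over $k$. Throughout, $n$ and $l$ are fixed and I treat the case $t=0$; the case $t=1$ is identical. Set $L := \bigoplus_{i\geq 0}\Ext^{2i}_A(M,N/I^nN)$; by Note \ref{no2} applied with $D = N/I^nN$ (which is a finitely generated $A$-module), $L$ is a finitely generated graded $\mathbf{T}^* = A[t_1,\ldots,t_c]$-module with $\deg t_j = 1$.

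The first substantive step is to show that $S_0 = \Ext^l_A(k,L)$ is itself a finitely generated graded $\mathbf{T}^*$-module. I would choose a resolution $\mathbb{K}_\bullet \to k$ by finitely generated free $A$-modules (available since $A$ is Noetherian and $k = A/\m$ is cyclic). Then $S_0$ is the $l$-th cohomology of $\Hom_A(\mathbb{K}_\bullet,L)$. The $t_j$-action on $L$ induces a degree-$1$ endomorphism of each $\Hom_A(K_j,L) \cong L^{\rank K_j}$ by post-composition, and these endomorphisms commute with the differentials $\phi \mapsto \phi \circ \partial$ by naturality of the Eisenbud operators in the second argument of $\Ext$. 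Thus $\Hom_A(\mathbb{K}_\bullet,L)$ is a complex of finitely generated graded $\mathbf{T}^*$-modules with $\mathbf{T}^*$-linear differentials, and since $\mathbf{T}^*$ is Noetherian, its cohomology $S_0$ is a finitely generated graded $\mathbf{T}^*$-module.

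Next, because $\m$ acts as zero on $k$, it acts as zero on $\Ext^l_A(k,-)$; in particular $\m S_0 = 0$, so $S_0$ becomes a finitely generated graded module over $\mathbf{T}^*/\m\mathbf{T}^* \cong k[t_1,\ldots,t_c]$. Standard Hilbert function theory then shows that $\dim_k (S_0)_j$ agrees with a polynomial in $j$ for $j \gg 0$, and such a polynomial is either identically zero or has only finitely many roots. Consequently, either $(S_0)_j = 0$ for all $j \gg 0$ or $(S_0)_j \neq 0$ for all $j \gg 0$. Each $(S_0)_j$ is a $k$-vector space, hence $\Ass_A((S_0)_j) = \{\m\}$ when nonzero and is empty otherwise, which yields the required index $u$ together with the claimed dichotomy.

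The only nontrivial technical point is the finite generation of $S_0$ over $\mathbf{T}^*$; once that structural fact is in hand, the reduction modulo $\m$ to a polynomial ring over $k$ and the eventually-polynomial behaviour of Hilbert functions deliver the conclusion immediately.
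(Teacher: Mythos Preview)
Your proof is correct and follows essentially the same approach as the paper: establish that $S_0$ is a finitely generated graded $\mathbf{T}^*$-module, observe that $\m$ annihilates it, and deduce the eventual stabilization of $\Ass_A\big((S_0)_j\big)$ together with the $\{\m\}$/$\emptyset$ dichotomy. The only minor variation is that you derive the stabilization via an explicit Hilbert-function argument over $k[t_1,\ldots,t_c]$, whereas the paper appeals (implicitly) to the standard asymptotic stability of associated primes of graded components of a finitely generated module over a Noetherian graded ring.
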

\begin{proof} 
	We prove the Lemma for $t=0$ only. For $t=1$ the proof is similer.

By ~\ref{gg} we have $\bigoplus_{i\geq 0}\Ext^{2i}_A(M,N/I^nN)$ is finitely generated graded $\mathbf{T}^*:=A[t_1,\ldots, t_c]$-module. Hence $S_0=\bigoplus_{i\geq 0}(S_0)_i= \Ext^l_A\big(k,\bigoplus_{i\geq 0}\Ext^{2i}_A(M,N/I^nN)\big)$ is finitely generated graded $\mathbf{T}^*$-module. So each $(S_0)_i$ is  finitely generated $A$-module. Hence there exist $u\in \mathbb{N}$ such that $\Ass_A\big((S_0)_j\big)=\Ass_A\big((S_0)_u\big)$ for all  $j\geq u$. 

Since for each $i\geq0$, $(S_0)_i$ is annhilated by $\m$, we are done.
\end{proof}

\section{Proof of Theorem \ref{main-tjp-prov}}
\s 
We begin by establishing the notation for $\mathbb{N}_0^2$-graded algebras, where $\mathbb{N}_0$ denotes the set of non-negative integers. A ring $R$ is called a $\mathbb{N}_0^2$-graded algebra  if $R=\bigoplus_{i,j\geq 0}R_{(i,j)}$ where each $R_{(i,j)}$ is an additive subgroup
of $R$ such that $R_{(i,j)}\cdot R_{(l,m)}\subseteq R_{(i+l,j+m)}$ for all $(i,j),(l,m)\in \mathbb{N}_0^2$. We say that $R$ is a standard Noetherian $\mathbb{N}_0^2$-graded algebra if $A=R_{(0,0)}$ is Noetherian and $R$ is finitely generated as an $A$ algebra by elements of degree $(1, 0)$ and $(0, 1)$,  i.e., it is generated in total degree one. Let $n=(n_1,n_2)$ and $m=(m_1,m_2)$, where $n,m\in \mathbb{N}_0^2$. We say $n\geq m$ if $n_i\geq m_i$ for all $i=1,2$. We will write $R_{++}$ for the ideal consisting of all sums of homogeneous elements $x_n\in R_n$ such that $n_i\geq1$, for all $i=1,2$. In other words, $R_{++}$ denotes the ideal of $R$ generated by $R_{(1,1)}$. An $R$-module $M$ is called bigraded if $M =\bigoplus_{r,s\in\Z} M_{(r,s)}$, where $M_{(r,s)}$ are additive subgroups of $M$ satisfying $R_{(r,s)}\cdot M_{(l,m)} \subseteq M_{(r+l,s+m)}$ for all $(r, s)\in  \mathbb{N}_0^2$ and $l, m\in \Z$.

Let us recall the following well known Theorem :
\begin{theorem}\label{jk1}
	Let $R=\bigoplus_{i,j\geq 0}R_{(i,j)}$ be a finitely generated standard $ \mathbb{N}_0^2$-graded algebra over a Noetherian local ring $R_{(0,0)}=(A,\m)$. Let $M$ be a finitely generated bigraded $R$-module. Then $H^i_{R_{++}}(M)_{(r,s)} = 0$ for all $r,s\gg 0$ and $i\geq0$.
\end{theorem}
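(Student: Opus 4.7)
The plan is to combine Mayer--Vietoris in local cohomology with two re-gradings of $R$, reducing the statement to the classical Serre-type vanishing for singly-graded local cohomology at the irrelevant ideal. Let $\mathfrak{a}_1$ be the ideal of $R$ generated by $R_{(1,0)}$ and $\mathfrak{a}_2$ the ideal generated by $R_{(0,1)}$. Since $R$ is standard bigraded, a bihomogeneous element of bidegree $(i,j)$ lies in $\mathfrak{a}_1$ (respectively in $\mathfrak{a}_2$) precisely when $i \geq 1$ (respectively $j \geq 1$); consequently, as bigraded ideals, $\mathfrak{a}_1 \cap \mathfrak{a}_2 = R_{++}$ and $\mathfrak{a}_1 + \mathfrak{a}_2 = R_+ := \bigoplus_{(r,s) \neq (0,0)} R_{(r,s)}$. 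The Mayer--Vietoris long exact sequence in local cohomology then gives, for every $i \geq 0$,
$$\cdots \to H^i_{R_+}(M) \to H^i_{\mathfrak{a}_1}(M) \oplus H^i_{\mathfrak{a}_2}(M) \to H^i_{R_{++}}(M) \to H^{i+1}_{R_+}(M) \to \cdots,$$
with all maps preserving the bigrading. So it suffices to prove bidegree-$(r,s)$ vanishing, for $r,s \gg 0$, of each of $H^i_{\mathfrak{a}_1}(M)$, $H^i_{\mathfrak{a}_2}(M)$, and $H^i_{R_+}(M)$.

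For $H^i_{\mathfrak{a}_1}(M)$ I would re-grade $R$ by only the first coordinate, setting $S_n := \bigoplus_{j \geq 0} R_{(n,j)}$. Then $R = \bigoplus_{n \geq 0} S_n$ is a standard graded algebra over the Noetherian ring $S_0$ (generated over $S_0$ in degree one by $R_{(1,0)}$); $\mathfrak{a}_1$ is precisely its irrelevant ideal $S_+$; and $M$ remains finitely generated as a graded $R$-module under this coarser grading. Classical graded Serre-type vanishing therefore gives $H^i_{\mathfrak{a}_1}(M)_n = 0$ for $n \gg 0$, equivalently $H^i_{\mathfrak{a}_1}(M)_{(r,s)} = 0$ for $r \gg 0$, uniformly in $s$. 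A symmetric argument (re-grading by the second coordinate) handles $\mathfrak{a}_2$, yielding vanishing for $s \gg 0$ uniformly in $r$. For $H^i_{R_+}(M)$ I would instead re-grade $R$ by the total degree $r+s$: under this grading $R$ is standard graded over $A$ with irrelevant ideal $R_+$, so $H^i_{R_+}(M)_{(r,s)} = 0$ whenever $r+s \gg 0$, which certainly contains the region $r,s \gg 0$.

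Feeding these three vanishings into the Mayer--Vietoris sequence yields $H^i_{R_{++}}(M)_{(r,s)} = 0$ for $r,s \gg 0$ and each fixed $i$. Since $R_{++}$ is generated by finitely many elements (of bidegree $(1,1)$), $H^i_{R_{++}}(M)$ vanishes identically once $i$ exceeds this number of generators, so a single threshold $(r_0,s_0)$ valid for all $i \geq 0$ can be taken. I do not foresee any serious obstacle: the Mayer--Vietoris sequence is well known to respect the bigrading, both re-gradings preserve finite generation of $M$, and the only routine check is that the base ring appearing in each application of singly-graded Serre-type vanishing is Noetherian, which follows from $R$ being a finitely generated $A$-algebra.
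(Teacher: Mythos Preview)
Your argument is correct: the identification $\mathfrak{a}_1\cap\mathfrak{a}_2=R_{++}$ and $\mathfrak{a}_1+\mathfrak{a}_2=R_+$ holds for standard bigraded $R$, Mayer--Vietoris then reduces the question to the three auxiliary vanishings, and each of these follows from the singly-graded vanishing theorem after the re-gradings you describe (the base rings $S_0=\bigoplus_j R_{(0,j)}$ and $A$ that appear are finitely generated $A$-algebras, hence Noetherian, so the classical result applies). The uniformity in $i$ is handled exactly as you say.

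As for comparison with the paper: the paper does not prove this statement at all---it simply cites \cite[Theorem~2.3]{Jkv02} (Jayanthan--Verma). So you have supplied a self-contained proof where the paper defers to the literature. Your Mayer--Vietoris-plus-regrading strategy is in fact one of the standard routes to bigraded vanishing results of this type, and is essentially how such statements are established in the cited source as well.
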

\begin{proof}
See [ \cite{Jkv02}, Theorem 2.3].
\end{proof}

Let us also recall the following result from[\cite{Tk},Theorem 4.4], which we will use to prove our next Theorem.
\begin{theorem}\label{sirkatz}
	Let $R$ denotes a Noetherian standard $\mathbb{N}_0^2$ graded $A$-algebra. Let $M=\bigoplus_{i,j\geq0}M_{(i,j)}$ be a (not necessarily finitely generated) bigraded $R$-module. Set $L:=H^0_{R_{++}}(M)$.  Assume $\Ass_A(M)$ is finite. Consider the following statements:
	\begin{enumerate}
		\item There exist $l\in \mathbb{N}_0^2$ such that $\Ass_A(L_n)=\Ass_A(L_l)$ for all $n\in \mathbb{N}_0^2$ with $n\geq l$.
		\item There exist $h\in \mathbb{N}_0^2$ such that $\Ass_A(M_n)=\Ass_A(M_h)$ for all $n\in \mathbb{N}_0^2$ with $n\geq h$.
	\end{enumerate}
	Then $(1)\implies (2)$.
\end{theorem}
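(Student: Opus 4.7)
The plan is to exploit the short exact sequence
$0 \to L \to M \to \ov{M} \to 0$ where $\ov{M} := M/L$, and to proceed in two steps: first show that $\Ass_A(\ov{M}_n)$ stabilises for large $n$, then combine this with $(1)$ to obtain $(2)$. Note that $\ov{M}$ has no $R_{++}$-torsion, i.e.\ $H^0_{R_{++}}(\ov{M}) = 0$, as is standard for Noetherian ideals.

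For the first step, since $R$ is standard bigraded we have $R_{(a,b)} = R_{(1,0)}^{a} R_{(0,1)}^{b}$ for all $a,b \geq 0$. This implies $R_{(a,b)}\bar y \neq 0$ for every nonzero $\bar y \in \ov{M}$ and every $(a,b) \geq (1,1)$: if $R_{(a,b)}\bar y = 0$, then $R_{(k,k)}\bar y = R_{(k-a,k-b)} R_{(a,b)}\bar y = 0$ for $k = \max(a,b)$, whence $R_{++}^k \bar y = 0$, contradicting torsion-freeness. Given $p = \ann_A(\bar y) \in \Ass_A(\ov{M}_n)$ and $f \in R_{(a,b)}$ with $f\bar y \neq 0$, the $A$-linear map $A/p \to \ov{M}_{n+(a,b)}$, $\alpha + p \mapsto \alpha f\bar y$, is injective (its kernel is a proper ideal of the domain $A/p$), so $\ann_A(f\bar y) = p$ and hence $p \in \Ass_A(\ov{M}_{n+(a,b)})$. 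Since $\Ass_A(\ov{M}) \subseteq \Ass_A(M)$ is finite, the monotone increasing sequence $\Ass_A(\ov{M}_{(k,k)})$ stabilises to some set $S_2$, and the monotonicity above then gives $\Ass_A(\ov{M}_n) = S_2$ for all $n$ sufficiently large.

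For the second step, let $S_1$ denote the stable value of $\Ass_A(L_n)$ from $(1)$. For $n$ sufficiently large we have $S_1 \subseteq \Ass_A(M_n) \subseteq S_1 \cup S_2$, so it suffices to stabilise $\Ass_A(M_n) \cap (S_2 \setminus S_1)$. The key claim is: if $p \in \Ass_A(M_n)$ with $p \in S_2 \setminus S_1$, then $p \in \Ass_A(M_{n'})$ for all $n' \geq n+(1,1)$. Writing $p = \ann_A(z)$ with $z \in M_n$, note $z \notin L_n$ (else $p \in S_1$), so $\bar z \neq 0$. The crucial subclaim is that $\ann_A(\bar z) = p$: if some $b \in \ann_A(\bar z) \setminus p$ existed, then $bz \in L_n$ would be nonzero and a direct computation (using $b \notin p$ and primality of $p$) gives $\ann_A(bz) = p$, forcing $p \in S_1$, a contradiction. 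With $\ann_A(\bar z) = p$, pick $f \in R_{(a,b)}$ with $(a,b) \geq (1,1)$ and $f\bar z \neq 0$; then $p \subseteq \ann_A(fz) \subseteq \ann_A(f\bar z) = p$ by the first step, giving $\ann_A(fz) = p$ and hence $p \in \Ass_A(M_{n+(a,b)})$.

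Finally, the main claim shows that for each $p \in S_2 \setminus S_1$ the set of large $n$ with $p \in \Ass_A(M_n)$ is either empty or contains $\{n' : n' \geq n_p + (1,1)\}$ for some $n_p$. Choosing $h$ beyond all these (finitely many) thresholds yields $\Ass_A(M_n)$ constant for $n \geq h$, completing the proof. The main obstacle is the lifting of associated primes from $\ov{M}_n$ back to $M_n$; this works because for $p \notin S_1$ the hypothesis $(1)$ forces $\ann_A(\bar z) = p$ rather than merely containing $p$, which in turn relies essentially on $p$ being prime.
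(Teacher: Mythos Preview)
The paper does not prove this theorem; it simply quotes it from Katz--Puthenpurakal \cite{Tk}, Theorem~4.4. So there is no in-paper proof to compare against, and I evaluate your argument on its own.

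Your overall plan (pass to $\ov{M}=M/L$, stabilise $\Ass_A(\ov{M}_n)$ via torsion-freeness, then lift using the hypothesis on $L$) is the right one, but there is a genuine gap that occurs in both steps. You assert that for $\bar y\in\ov{M}_n$ with $\ann_A(\bar y)=p$ and any $f\in R_{(a,b)}$ with $f\bar y\neq 0$, the map $A/p\to\ov{M}_{n+(a,b)}$, $\alpha+p\mapsto\alpha f\bar y$, is injective because ``its kernel is a proper ideal of the domain $A/p$''. That inference is false: a proper ideal of a domain need not be zero unless the domain is a field. Nothing prevents $\ann_A(f\bar y)$ from strictly containing $p$; torsion-freeness only tells you that for each $\alpha\notin p$ \emph{some} $f$ (depending on $\alpha$) has $\alpha f\bar y\neq 0$, not that a single $f$ works for all $\alpha$. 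The same flawed equality $\ann_A(f\bar z)=p$ is invoked again in your second step.

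The repair is short: replace the single element $f\bar y$ by the $A$-submodule $N:=R_{(a,b)}\bar y\subseteq\ov{M}_{n+(a,b)}$. Clearly $p\subseteq\ann_A(N)$; conversely, if $\alpha\in A$ kills $N$ then $R_{(a,b)}(\alpha\bar y)=0$, and your own torsion-freeness argument forces $\alpha\bar y=0$, i.e.\ $\alpha\in p$. Thus $\ann_A(N)=p$. Since $R$ is standard graded, $R_{(a,b)}$ is a finitely generated $A$-module, hence so is $N$; as $p$ is the unique minimal prime over $\ann_A(N)$, we get $p\in\Ass_A(N)\subseteq\Ass_A(\ov{M}_{n+(a,b)})$. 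The same device (use $R_{(a,b)}z$ instead of $fz$) fixes the lift in Step~2: one gets $\ann_A(R_{(a,b)}z)=p$ from $p\subseteq\ann_A(R_{(a,b)}z)\subseteq\ann_A(R_{(a,b)}\bar z)=p$, and hence $p\in\Ass_A(M_{n+(a,b)})$. With this correction your argument goes through.
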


We note that we do not know whether $(1)\implies (2)$ if $\Ass_A(M)$ is not a priori known to be finite.

\begin{theorem}\label{t3}
	Along with Set-up~\ref{1} further assume $Q$ is a local ring with residue field $k$. Let $M$  and $N$ be two finitely generated $A$-module with $\projdim_Q(M) < \infty$, let $I$ be an ideal of $A$. Then for fixed $l$ and for every fixed $t=0,1$ we have that : 
\[\text{either}~\Ext^l_A(k,\Ext^{2i+t}_A(M,N/{I^nN}))=0 ~\text{for all}~ n,i\gg 0\]
\[\text{or}~\Ext^l_A(k,\Ext^{2i+t}_A(M,N/{I^nN}))\neq0 ~\text{for all}~ n,i\gg 0.\]
\end{theorem}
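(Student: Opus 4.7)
My strategy is to reduce the claimed dichotomy to a statement about stability of associated primes and then to deploy the finitely generated bigraded modules from Section~2 together with the long exact sequence arising from $0 \to I^nN \to N \to N/I^nN \to 0$. The first observation is that for any $A$-module $X$, the module $\Ext^l_A(k,X)$ is a $k$-vector space, so $\Ass_A(\Ext^l_A(k,X)) \subseteq \{\m\}$. Hence the claimed vanishing vs.\ non-vanishing dichotomy for $\Ext^l_A(k, \Ext^{2i+t}_A(M, N/I^nN))$ is equivalent to eventual stability of this associated-prime set in $(n,i) \gg 0$.

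The long exact sequence of $\Ext^*_A(M,-)$ arising from $0 \to I^nN \to N \to N/I^nN \to 0$ decomposes, via its connecting maps, into two short exact sequences
\[ 0 \to \image\phi_{i,n} \to \Ext^i_A(M,N) \to K_{i,n} \to 0, \qquad 0 \to K_{i,n} \to \Ext^i_A(M,N/I^nN) \to L_{i+1,n} \to 0, \]
where $\phi_{i,n}\colon \Ext^i_A(M, I^nN) \to \Ext^i_A(M,N)$, $K_{i,n} := \Ext^i_A(M,N)/\image\phi_{i,n}$, and $L_{i+1,n} \subseteq \Ext^{i+1}_A(M, I^nN)$ is the image of the connecting map. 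By Theorem~\ref{t1}, the module $\mathcal{E}(\bigoplus_n I^nN)$ is finitely generated bigraded over $\mathcal{T}^*$; hence $\bigoplus_{i,n} L_{i,n}$ (a bigraded submodule) and $\bigoplus_{i,n} \image\phi_{i,n}$ (a bigraded quotient) are finitely generated bigraded $\mathcal{T}^*$-modules. Applying $\Ext^l_A(k,-)$ via a free resolution of $k$ preserves finite generation over $\mathcal{T}^*$, so the West-type stability argument used in the proof of Theorem~\ref{last1} yields the desired dichotomy for $\Ext^l_A(k, L_{i+1,n})$ and $\Ext^l_A(k, \image\phi_{i,n})$.

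Transferring to $\Ext^l_A(k, K_{i,n})$ requires the LES associated with the first SES above; here $\bigoplus_i \Ext^l_A(k, \Ext^i_A(M,N))$ is finitely generated over $\mathbf{T}^*$ (independent of $n$) by Gulliksen. Finally, applying $\Ext^*_A(k,-)$ to the second SES gives a six-term exact sequence with $\Ext^l_A(k, \Ext^i_A(M,N/I^nN))$ in the middle, flanked by $\Ext^{l-1}_A(k, L_{i+1,n})$, $\Ext^l_A(k, K_{i,n})$, $\Ext^l_A(k, L_{i+1,n})$, and $\Ext^{l+1}_A(k, K_{i,n})$, each of which satisfies the dichotomy by the preceding steps.

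The hardest step, and the main obstacle I foresee, is combining these flanking dichotomies through the LES: the ranks of the connecting homomorphisms could a priori oscillate in $(n,i)$, so the dichotomy for the middle term does not follow formally. My resolution would be to invoke Theorem~\ref{sirkatz} applied to a bigraded module packaging the kernels and cokernels of these connecting maps, using the observation that the total module has $\Ass_A \subseteq \{\m\}$ (hence finite) so that stability of $\Ass_A$ for its $R_{++}$-torsion part—which is inherited from the dichotomies already established—propagates via Theorem~\ref{sirkatz} to stability of $\Ass_A$ for $\Ext^l_A(k, \Ext^i_A(M, N/I^nN))$, giving the claimed dichotomy.
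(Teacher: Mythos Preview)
Your overall strategy is the same as the paper's: reduce the dichotomy to asymptotic stability of $\Ass_A\big(\Ext^l_A(k,V_{(n,i)})\big)$ for the bigraded module $V=\bigoplus_{n,i}\Ext^{2i+t}_A(M,N/I^nN)$, and then invoke Theorem~\ref{sirkatz}, using that $\Ass_A(\Ext^l_A(k,V))\subseteq\{\m\}$ is finite. Where you diverge from the paper is in the short exact sequence you feed into the machine, and this is exactly where your argument develops a gap.

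You use $0\to I^nN\to N\to N/I^nN\to 0$. The paper instead uses $0\to I^nN/I^{n+1}N\to N/I^{n+1}N\to N/I^nN\to 0$, and the point of this choice is that the resulting four-term exact sequence has the form $W\to V(1,0)\to V\to U'$ with $W,U'$ finitely generated over $\mathcal{T}^*$ and, crucially, with $V$ appearing twice, shifted by one in the $n$-variable. After applying $\Ext^l_A(k,-)$ and then $H^0_{\mathcal{T}_{++}}(-)$, Theorem~\ref{jk1} kills the finitely generated flanking pieces in large bidegrees, yielding an isomorphism $H^0_{\mathcal{T}_{++}}(\Ext^l_A(k,V))_{(n+1,i)}\cong H^0_{\mathcal{T}_{++}}(\Ext^l_A(k,V))_{(n,i)}$ for $(n,i)\gg 0$. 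Combined with Lemma~\ref{ll2} (stability in $i$ for fixed $n$), this verifies hypothesis~(1) of Theorem~\ref{sirkatz}. That shift-by-one device is the heart of the proof.

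Your sequence produces instead $U\to B\to V\to U(0,1)$ with $U$ finitely generated but $B=\bigoplus_{n,i}\Ext^i_A(M,N)$ not finitely generated over $\mathcal{T}^*$ and, more importantly, with no map relating $V$ to a shift of itself. You correctly flag that propagating dichotomies through a long exact sequence is the hard step, but your proposed fix---``stability of $\Ass_A$ for its $R_{++}$-torsion part, which is inherited from the dichotomies already established''---is not an argument. Knowing that the flanking terms $C_j$ are finitely generated (hence have vanishing $H^0_{\mathcal{T}_{++}}$ in large bidegree) does not by itself determine $H^0_{\mathcal{T}_{++}}(\Ext^l_A(k,V))_{(n,i)}$; you would still need some mechanism to compare different values of $n$, and your sequence supplies none. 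Switching to the associated-graded sequence $0\to I^nN/I^{n+1}N\to N/I^{n+1}N\to N/I^nN\to 0$ is precisely what provides that mechanism.
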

\begin{proof}We prove the Theorem for $t=0$ only. For $t=1$ the proof is similer.

	For each $n\geq0$, consider the exact sequence
	\[0\rightarrow I^nN/I^{n+1}N \rightarrow N/I^{n+1}N\rightarrow N/I^nN\rightarrow 0,\] 
	which induces an exact sequence of $A$-modules(for each $i$):\\
	$$\Ext^{2i}_A(M,I^nN/I^{n+1}N)\longrightarrow \Ext^{2i}_A(M,N/I^{n+1}N)\longrightarrow \Ext^{2i}_A(M,N/I^nN)$$ 
	$$\longrightarrow \Ext^{2i+1}_A(M,I^nN/I^{n+1}N).$$ 
	Taking direct sum over $n,i$ and using the naturality of the Eisenbud operators $t_j$, we have an exact sequence:
	~\[\bigoplus_{n,i\geq 0}\Ext^{2i}_A(M,I^nN/I^{n+1}N)\rightarrow \bigoplus_{n,i\geq 0}\Ext^{2i}_A(M,N/I^{n+1}N))\rightarrow\] \[\bigoplus_{n,i\geq 0}\Ext^{2i}_A(M,N/I^nN) \rightarrow \bigoplus_{n,i\geq 0}\Ext^{2i+1}_A(M,I^nN/I^{n+1}N)\]
	 of bigraded $\mathcal{T}^*$-modules. 
	\[ W=\bigoplus_{n,i\geq 0}W_{n,i}=\bigoplus_{n,i\geq 0}\Ext^{2i}_A(M,I^nN/I^{n+1}N);\] \[V=\bigoplus_{n,i\geq 0}V_{n,i}=\bigoplus_{n,i\geq 0}\Ext^{2i}_A(M,N/I^nN);\]
	\[ \text{Set}~U'=\bigoplus_{n,i\geq 0}U'_{n,i}=\bigoplus_{n,i\geq 0}\Ext^{2i+1}_A(M,I^nN/I^{n+1}N).\]
	So we have an exact sequence $W\rightarrow V(1,0)\rightarrow V\rightarrow U'.$ Now setting $X=\rm Image\big(W\rightarrow V(1,0)\big)$; $Y=\rm Image\big(V(1,0)\rightarrow V\big)$; $Z=\rm Image(V\rightarrow U') $, we obtain the short exact sequences:
	\begin{equation}\label{e12}
	0\longrightarrow X\longrightarrow V(1,0)\longrightarrow Y\longrightarrow 0.
	\end{equation}
	\begin{equation}\label{e13}
	0\longrightarrow Y\longrightarrow V\longrightarrow Z\longrightarrow 0.
	\end{equation}
	Applying $\Hom_A(k,-)$ on \ref{e12} and \ref{e13}, we get the following exact sequences:
	\begin{equation}\label{e14}
	0\longrightarrow C_1 \longrightarrow \Ext^l_A(k,V(1,0))\longrightarrow \Ext^l_A(k,Y)\longrightarrow C_2\longrightarrow 0.
	\end{equation}
	\begin{equation}\label{e15}
	0\longrightarrow C_3 \longrightarrow \Ext^l_A(k,Y) \longrightarrow \Ext^l_A(k,V)\longrightarrow C_4\longrightarrow 0.
	\end{equation}
	We set $C_1=$Image$(\Ext^{l}_A(k,X)\rightarrow \Ext^l_A(k,V(1,0)))$, $C_2=$Image$(\Ext^{l}_A(k,Y)\rightarrow \Ext^{l+1}_A(k,X))$, $C_3=$Image$(\Ext^{l-1}_A(k,Z)\rightarrow \Ext^l_A(k,Y))$, $C_4=$Image$(\Ext^{l}_A(k,V)\rightarrow \Ext^{l}_A(k,Z))$. By virtue of \ref{t1}  ~$U'~\text{and}~W$ are finitely generated bigraded $\mathcal{T}^*$-module ( by  Note~\ref{no3}~), and hence $X$ and $Z$ are so. This implies that $\Ext^l_A(k,X))$, $\Ext^{l+1}_A(k,X))$, $\Ext^{l-1}_A(k,Z))$ and $\Ext^{l}_A(k,Z))$ are finitely generated bigraded $\mathcal{T}^*$-module, and hence $C_1,C_2,C_3$ and $C_4$ are so. Set
	 \[D_1= \text{Image}\big(\Ext^l_A(k,V(1,0))\rightarrow \Ext^l_A(k,Y)\big);\]
	\[D_2= \text{Image}\big(\Ext^l_A(k,Y)\rightarrow\Ext^l_A(k,V) \big).\]
	 Now let us break each of the exact sequence \ref{e14} and \ref{e15} in two parts. 
	\begin{equation}\label{e19}
	0\longrightarrow C_1 \longrightarrow \Ext^l_A(k,V(1,0))\longrightarrow D_1\longrightarrow 0.
	\end{equation}
	\begin{equation}\label{e20}
	0\longrightarrow D_1\longrightarrow \Ext^l_A(k,Y)\longrightarrow C_2\longrightarrow 0.
	\end{equation}
	\begin{equation}\label{e21}
	0\longrightarrow C_3 \longrightarrow \Ext^l_A(k,Y) \longrightarrow D_2\longrightarrow 0.
	\end{equation}
	\begin{equation}\label{e22}
0\longrightarrow D_2\longrightarrow \Ext^l_A(k,V)\longrightarrow C_4\longrightarrow 0.
	\end{equation}

 Let $\mathcal{T}_{++}$ be the ideal of $\mathcal{T}^*$ consisting of all sums of homogeneous elements $x_n\in \mathcal{T}^*_n$ such that $n_i\geq 1$, for $i=1,2$. Now considering the corresponding long eaxct sequence in local cohomology of ~\ref{e19},~\ref{e20},~\ref{e21},~\ref{e22} and by Theorem ~\ref{jk1}  there exist $(n_0,i_0) \in\mathbb{N}^2_0$ such that
\begin{equation}\label{e23}
H^0_{\mathcal{T}_{++}}(\Ext^l_A(k,V))_{n+1,i}\cong H^0_{\mathcal{T}_{++}}(\Ext^l_A(k,V))_{n,i}~\text{for all}~(n,i)\geq (n_0,i_0).
\end{equation}

 Set $L=H^0_{\mathcal{T}_{++}}(\Ext^l_A(k,V))$. Now using Lemma~\ref{ll2}~ on $L_{(n,i)}$ for fixed $n\geq0$ and then by ~\ref{e23} there exist $(n_\alpha,i_\alpha) \in\mathbb{N}^2_0$ such that
 	$$\Ass_A(L_{n,i})=\Ass_A(L_{n_\alpha,i_\alpha})~\text{for all}~ (n,i)\in \mathbb{N}^2_0~\text{with} ~(n,i)\geq (n_\alpha,i_\alpha).$$

  Since $\Ass_A\big(\Ext^l_A(k,V)\big)$ is either $\m$ or empty, by virtue of ~\ref{sirkatz} there exist $(n',i')\in \mathbb{N}^2_0$ such that 
   $$\Ass_A\big(\Ext^l_A(k,V)_{(n,i)}\big)=\Ass_A\big(\Ext^l_A(k,V)_{(n',i')}\big)~\text{for all}~ (n,i)\geq (n',i').$$
   
   Now the result follows from a well-known fact: for an $A$-module M, $\Ass_A(M)$ is non empty ~\ff~ $M\neq0$. Hence we are done .
 \end{proof}
 
We now give proof of our main result. We restate it for the convenience of the reader. 
 \begin{theorem}
 Let $(A,\m)$ be a local complete intersection ring. Let $M$ and $N$ be two finitely generated $A$-module, let $I$ be an ideal of $A$. Then for fixed $t=0,1$ we have  $$\depth\big(\Ext^{2i+t}_A(M,N/{I^nN})\big)~\text{is constant for all}~ n,i\gg0.$$	
 \end{theorem}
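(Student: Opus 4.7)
The plan is to deduce the theorem from Theorem~\ref{t3} together with the standard characterization
\[
\depth_A(E) \;=\; \min\{\, l \geq 0 : \Ext^l_A(k, E) \neq 0 \,\},
\]
valid for any finitely generated module $E$ over a local ring $(A, \m)$ with residue field $k$, with the convention that the minimum over the empty set is $+\infty$ (occurring precisely when $E = 0$). First I would reduce to the setting of Set-up~\ref{1} exactly as in Corollary~3.2: passing to the $\m$-adic completion, $\hat{A}$ is faithfully flat over $A$ with closed fibre a field, so $\depth_A(E) = \depth_{\hat{A}}(\hat{E})$ for every finitely generated $E$; moreover completion commutes with Ext of finitely generated modules, giving $\widehat{\Ext^i_A(M,N/I^nN)} \cong \Ext^i_{\hat{A}}(\hat{M}, \widehat{N/I^nN})$. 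Since $A$ is a complete intersection, $\hat{A} = Q/(\boldsymbol{f})$ for a regular local ring $Q$ with $\boldsymbol{f}$ a regular sequence, and $\projdim_Q \hat{M} < \infty$ because $\hat{M}$ is finitely generated over $Q$. Hence the hypotheses of Theorem~\ref{t3} are in force.

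Now fix $t \in \{0,1\}$. For each $l \in \{0, 1, \ldots, \dim A\}$, Theorem~\ref{t3} produces a pair $(n_l, i_l) \in \mathbb{N}_0^2$ such that the vanishing or non-vanishing of $\Ext^l_A(k, \Ext^{2i+t}_A(M, N/I^nN))$ is constant on the region $(n, i) \geq (n_l, i_l)$. Let $(\alpha, \beta)$ be the coordinate-wise maximum of these finitely many pairs. Then for every $(n, i) \geq (\alpha, \beta)$ the subset
\[
S_{n,i} \;=\; \{\, l \in \{0, 1, \ldots, \dim A\} : \Ext^l_A(k, \Ext^{2i+t}_A(M, N/I^nN)) \neq 0 \,\}
\]
does not depend on $(n, i)$. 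Since a nonzero finitely generated $A$-module has depth at most $\dim A$, the depth of $E_{n,i} := \Ext^{2i+t}_A(M, N/I^nN)$ equals $\min S_{n,i}$ when $S_{n,i}$ is nonempty, while $S_{n,i} = \emptyset$ forces $E_{n,i} = 0$ (so $\depth = +\infty$). In either case the depth is constant for $(n, i) \geq (\alpha, \beta)$, proving the theorem.

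All substantive work has already been carried out inside Theorem~\ref{t3}; the argument above only assembles it with a completion reduction and the Ext-characterization of depth, so no real obstacle remains. The only small point to check is that it suffices to let $l$ range over the finite set $\{0, 1, \ldots, \dim A\}$, which is guaranteed by the depth bound $\depth E \leq \dim A$ for nonzero finitely generated modules.
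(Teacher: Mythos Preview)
Your proposal is correct and follows essentially the same route as the paper: reduce to the complete case so that Set-up~\ref{1} and Theorem~\ref{t3} apply, use the Ext-characterization of depth, invoke Theorem~\ref{t3} for each $l$ with $0 \le l \le \dim A$, and take the coordinate-wise maximum of the resulting thresholds. Your treatment of the degenerate case $E_{n,i}=0$ (depth $+\infty$) is slightly more explicit than the paper's, but otherwise the arguments coincide.
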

 \begin{proof}
 	It is well known that for a \fg $A$-module $E$, 
 	\begin{equation}\label{i13-2}
 	\depth_A{(E)}=\depth_{\hat{A}}(\hat{E}),
 	\end{equation}
 	 where $\hat{A}$ and $\hat{E}$ are completion of $A$ and $E$ respectively. Since $\hat{A}$ is flat over $A$ we have for each fixed $n,i\geq0$ and for each fixed $t=0,1$ $$\widehat{\Ext^{2i+t}_A(M,N/{I^nN})}=\Ext^{2i+t}_{\hat{A}}(\hat{M},\widehat{N/{I^nN}}).$$

 	  So we may assume $A$ is complete. So there exist a regular local ring $Q$ with $A=Q/{(f)}$ and $f=f_1,\ldots,f_{c}$ is a $Q$-regular sequence. Also we will have $\projdim_{Q}(M)<\infty$, since $M$ is a \fg $A$-module and so as $Q$-module.
 	We prove the Theorem for $t=0$ only. For $t=1$ the proof is similer.
 	
 	It is well known that for fixed $n,i$ 
 	\begin{equation*}
 	\depth\big({\Ext^{2i}_{A}(M,N/{I^nN})}\big) = \min\{ l : \Ext^l_{A}\big( k,\Ext^{2i}_{A}(M,N/{I^nN})\big)\neq 0 \}.
 	\end{equation*}

 	For each fixed $(n,i)\in \mathbb{N}^2_0$ and for each $l\geq 0$ set
 	 $$E^{l}_{(n,i)}:=\Ext^l_{A}\big( k,\Ext^{2i}_{A}(M,N/{I^nN}) \big).$$

 	 By Theorem~\ref{t3} for each fixed $l$ there exist some $(n_l,i_l)\in \mathbb{N}^2_0$ such that 
 	\[\text{either}~E^{l}_{(n,i)}=0 ~\text{for all}~ (n,i)\geq(n_l,i_l)~\text{or}~E^{l}_{(n,i)}\neq0 ~\text{for all}~ (n,i)\geq(n_l,i_l).\]

  Let $(\hat{n},\hat{i})$ be the maximimum of such $(n_j,i_j)$ for $0\leq j \leq \dim(A)$. Let $ \eta= \min\{l|E^{l}_{(\hat{n},\hat{i})}\neq0\}$. Hence we have $\depth\big({\Ext^{2i}_A(M,N/{I^nN})}\big)=\eta$ for all  $(n,i)\geq (\hat{n},\hat{i})$.
 \end{proof}
\section{Polynomial growth of Bass numbers}
\s \label{hhhh1}
Let $R$ be a Noetherian ring. Let $M$ be a finitely generated $R$-module. For any prime ideal $\p$ of $R$, the $j$-th Bass numbers of $M$ \wrt ~$\p$ is the $k(\p)$-dimension of $\Ext^j_{R_{\p}}\big(k(\p),M_{\p}\big)$, denoted by $\mu_j(\p,M)$, where $k(\p)$ is residue field of $A_{\p}$. Throughout this section we are assuming Set-up~\ref{1}. Let $M$  and $N$ be two finitely generated $A$-module with $\projdim_Q(M)<\infty$, let $I$ be an ideal of $A$. Set $\mathbf{T}$ as in  Note~\ref{no1}~ and  $\mathcal{T}$ as in  Note~\ref{no3}. Here we analyze the Bass number of $\Ext^{2i+t}_A(M,N/{I^nN})$ for fixed $t=0,1$. We have proved here for any prime $\p$ of $A$ and  for fixed $t=0,1$ the numbers $\mu_j\big(\p,\Ext^{2i+t}_A(M,N/{I^nN})\big)$ are given by polynomial in $(n,i)$ with rational coefficients for all sufficiently large $(n,i)$.
\begin{theorem}\label{t2}
Along with hypothesis as in~\ref{hhhh1} further assume $Q$ is a local ring with residue field $k$. Then for a fixed integer $l$ and for every fixed $t=0,1$ we have 
	\[ \lambda_A\bigg( \Ext^l_A\big(k,\Ext^{2i+t}_A(M,N/{I^nN})\big) \bigg)\]
	are given by polynomial in $(n,i)$ with rational coefficients for all sufficiently large $(n,i)$.
\end{theorem}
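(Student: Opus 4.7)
The plan is to re-use the four short exact sequences \eqref{e19}--\eqref{e22} from the proof of Theorem~\ref{t3}, but to extract \emph{length} information rather than associated-prime information. We treat the case $t=0$ only; the case $t=1$ is identical. Set $f(n,i) := \lambda_A\bigl(\Ext^l_A(k,\Ext^{2i}_A(M,N/I^nN))\bigr) = \lambda_A(\Ext^l_A(k,V)_{(n,i)})$, where $V$ is the bigraded module from the proof of Theorem~\ref{t3}, and put
\begin{equation*}
h(n,i) \;:=\; \lambda(C_1)_{(n,i)} - \lambda(C_2)_{(n,i)} + \lambda(C_3)_{(n,i)} - \lambda(C_4)_{(n,i)}.
\end{equation*}
Additivity of length on \eqref{e19}--\eqref{e22}, combined with the identity $\Ext^l_A(k,V(1,0))_{(n,i)} = \Ext^l_A(k,V_{(n+1,i)})$, will yield the one-step recursion
\begin{equation*}
f(n+1,i) - f(n,i) \;=\; h(n,i).
\end{equation*}
The strategy then has two halves: (i) show $h$ is a polynomial in $(n,i)$ with rational coefficients for $(n,i)\gg 0$, and (ii) telescope the recursion to transfer this to $f$.

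For (i), each $C_j$ is a finitely generated bigraded $\mathcal{T}^*$-module (already observed in the proof of Theorem~\ref{t3}). Moreover, each $C_j$ is a subquotient of some $\Ext^{l'}_A(k,-)$ and is therefore annihilated by $\m$. Hence each $C_j$ is a finitely generated bigraded module over the standard bigraded Noetherian $k$-algebra $\mathcal{T}^*/\m\mathcal{T}^* \cong F(I)[t_1,\ldots,t_c]$, where $F(I) := \bigoplus_{n\geq 0} I^n/\m I^n$ is the fiber cone of $I$. The classical bigraded Hilbert polynomial theorem then supplies a polynomial in $(n,i)$ with rational coefficients that agrees with $\dim_k(C_j)_{(n,i)} = \lambda(C_j)_{(n,i)}$ for $(n,i)\gg 0$; the signed combination $h(n,i)$ inherits this property.

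For (ii), we fix $n_0$ and $i_0$ large enough that $h(m,i)$ coincides with a fixed polynomial $H(m,i)\in\mathbb{Q}[m,i]$ for all $m\geq n_0$ and $i\geq i_0$. Telescoping gives, for $n\geq n_0$ and $i\geq i_0$,
\begin{equation*}
f(n,i) \;=\; f(n_0,i) \;+\; \sum_{m=n_0}^{n-1} H(m,i),
\end{equation*}
and the sum on the right is a polynomial in $(n,i)$ by Faulhaber-type summation. For the remaining term $f(n_0,i)$ we appeal to Gulliksen's theorem~\ref{gg}: for fixed $n_0$, $\bigoplus_{i\geq 0}\Ext^{2i}_A(M,N/I^{n_0}N)$ is a finitely generated graded $\mathbf{T}^*$-module; applying $\Ext^l_A(k,-)$ termwise through a finitely generated free $A$-resolution of $k$ preserves finite generation and kills the $\m$-action, so the outcome is finitely generated over $k[t_1,\ldots,t_c]$ and its Hilbert function $f(n_0,i)$ is polynomial in $i$ for $i\gg 0$. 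Adding the two polynomial contributions delivers the desired polynomial in $(n,i)$ with rational coefficients.

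The main delicate point to verify carefully is the assertion that $\Ext^l_A(k,X)$ inherits finite generation as a bigraded $\mathcal{T}^*$-module from $X$, which cannot rest on finite projective dimension of $k$ since that fails over the complete intersection $A$. The remedy is that only a finitely generated \emph{free} $A$-resolution $P_\bullet$ of $k$ is needed: each $\Hom_A(P_l,X) \cong X^{\operatorname{rank} P_l}$ is then a finitely generated bigraded $\mathcal{T}^*$-module, and Noetherianity of $\mathcal{T}^*$ transfers finite generation to the cohomology $\Ext^l_A(k,X)$. After that, the rest of the argument is routine bookkeeping on two nested Hilbert polynomial statements.
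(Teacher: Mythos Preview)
Your argument is correct, but it takes a genuinely different route from the paper's own proof. The paper does \emph{not} reuse the sequences \eqref{e19}--\eqref{e22} of Theorem~\ref{t3}; instead it starts fresh from the short exact sequence $0 \to I^nN \to N \to N/I^nN \to 0$, obtaining a four-term sequence $U \to B \to V \to U(0,1)$ with $B_{(n,i)} = \Ext^i_A(M,N)$ \emph{independent of $n$}. This independence lets the paper write down the bigraded Hilbert series of $\Ext^l_A(k,B)$ directly as $\tfrac{\tilde P(z)}{(1-z^2)^c}\cdot\tfrac{1}{1-\omega}$, and then the additivity of length along two four-term exact sequences yields the full Hilbert series of $\Ext^l_A(k,V)$ as an explicit rational function with denominator $(1-z^2)^c(1-\omega)^r$, from which polynomial growth is read off in one stroke. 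Your approach, by contrast, recycles the $V(1,0)\to V$ comparison from Theorem~\ref{t3} to produce the recursion $f(n+1,i)-f(n,i)=h(n,i)$, shows $h$ is eventually polynomial via the same finite-generation argument for the $C_j$, and then telescopes, invoking Gulliksen separately to handle the initial term $f(n_0,i)$. Both arguments hinge on the same mechanism (the $C_j$ are finitely generated over a standard bigraded $k$-algebra), but the paper's choice of filtration gives the Hilbert series in closed form, whereas yours trades that for the economy of reusing the setup of Theorem~\ref{t3}.
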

\begin{proof}
	For each $n\geq0$, consider the exact sequence
	\[0\rightarrow I^nN \rightarrow N\rightarrow N/I^nN\rightarrow 0,\] 
	which induces an exact sequence of $A$-modules(for each $i$):
	\[\Ext^i_A(M,I^nN)\longrightarrow \Ext^i_A(M,N)\longrightarrow \Ext^i_A(M,N/I^nN) \longrightarrow \Ext^{i+1}_A(M,I^nN).\]
	Taking direct sum over $n,i$ and using the naturality of the Eisenbud operators $t_j$, we have an exact sequence:  
	$$\bigoplus_{n,i\geq 0}\Ext^i_A(M,I^nN)\longrightarrow \bigoplus_{n,i\geq 0}\Ext^i_A(M,N)\longrightarrow \bigoplus_{n,i\geq 0}\Ext^i_A(M,N/I^nN)$$
	$$ \longrightarrow \bigoplus_{n,i\geq 0}\Ext^{i+1}_A(M,I^nN)$$
	 of bigraded $\mathcal{T}$-modules.
	
	\[ \text{Set}~U=\bigoplus_{n,i\geq 0}U_{n,i}=\bigoplus_{n,i\geq 0}\Ext^i_A(M,I^nN);~ B=\bigoplus_{n,i\geq 0}B_{n,i}=\bigoplus_{n,i\geq 0}\Ext^i_A(M,N);~\text{and}\]
	\[V=\bigoplus_{n,i\geq 0}V_{n,i}=\bigoplus_{n,i\geq 0}\Ext^i_A(M,N/I^nN).\]
	So we have  an exact sequence $U\longrightarrow B\longrightarrow V\longrightarrow U(0,1).$ Now setting $X=$ Image$(U\longrightarrow B)$, $Y=$ Image$(B\longrightarrow V)$ and $Z=$ Image$(V\longrightarrow U(0,1))$, we obtain the short exact sequences:
	\begin{equation}\label{e1}
	0\longrightarrow X\longrightarrow B\longrightarrow Y\longrightarrow 0.
	\end{equation}
	\begin{equation}\label{e2}
	0\longrightarrow Y\longrightarrow V\longrightarrow Z\longrightarrow 0.
	\end{equation}
	Applying $\Hom_A(k,-)$ on \ref{e1} and \ref{e2}, we get the following exact sequences:
	\begin{equation}\label{e3}
	0\longrightarrow C_1 \longrightarrow \Ext^l_A(k,B)\longrightarrow \Ext^l_A(k,Y)\longrightarrow C_2\longrightarrow 0,
	\end{equation}
	\begin{equation}\label{e4}
	0\longrightarrow C_3 \longrightarrow \Ext^l_A(k,Y) \longrightarrow \Ext^l_A(k,V)\longrightarrow C_4\longrightarrow 0,
	\end{equation}
	where we set $C_1=$Image$(\Ext^{l}_A(k,X)\rightarrow \Ext^l_A(k,B))$, $C_2=$Image$(\Ext^{l}_A(k,Y)\rightarrow \Ext^{l+1}_A(k,X))$, $C_3=$Image$(\Ext^{l-1}_A(k,Z)\rightarrow \Ext^l_A(k,Y))$, $C_4=$Image$(\Ext^{l}_A(k,V)\rightarrow \Ext^{l}_A(k,Z))$.
	By virtue of \ref{t1}  ~$U$ is finitely generated bigraded $\mathcal{T}$-module, and hence $X$ and $Z$ are so. This implies that $\Ext^l_A(k,X))$, $\Ext^{l+1}_A(k,X))$, $\Ext^{l-1}_A(k,Z))$ and $\Ext^{l}_A(k,Z))$ are finitely generated bigraded $\mathcal{T}$-module, and hence $C_1,C_2,C_3$ and $C_4$ are so. Now $C_j$ is annhilated by $\m$ for $j=1,2,3,4$, where $\m$ is the unique maximal ideal of $A$ . So for $j=1,2,3,4$  each $(C_j)_{(n,i)}$ is finitely generated $k$-module for all $(n,i)\in \mathbb{N}^2$. Hence as an $A$-module $(C_j)_{(n,i)}$ has finite length for all $(n,i)\in \mathbb{N}^2$ and $j=1,2,3,4$. Therefore, by applying the Hilbert-Serre Theorem to the bigraded $\mathcal{T}$-modules $C_1,C_2,C_3$ and $C_4$, we obtain
	\begin{equation}\label{e5}
	\sum_{n,i\geq 0}\lambda_A(C_j)_{(n,i)}z^i\omega^n= \frac{P_{C_j}(z,\omega)}{(1-z^2)^c(1-\omega)^r}~~ \text{for}~~ j=1,2,3,4,
	\end{equation}
	for some polynomials $P_{C_j}(z,\omega)$ over $\Z$ for $ j=1,2,3,4$.
	
	Fix $n\geq 0$. Now by~\ref{gg}, $\tilde{B}=\bigoplus_{i\geq 0}B_{n,i}= \bigoplus_{i\geq 0}\Ext^i_A(M,N)$ is finitely generated graded $\mathbf{T}:=A[t_1,\ldots, t_c]$-module and hence $\Ext^l_A(k,\tilde{B})$ is also so. As $\Ext^l_A(k,\tilde{B})$ is annhilated by $\m$, $\Ext^l_A(k,B_{n,i})$ has finite length as $A$-module for each $i\geq0$. Therefore, again by the Hilbert-Serre Theorem, we have for each fixed $n\geq 0$:
	\[\sum_{i\geq0}\lambda_A\big(\Ext^l_A(k,B_{n,i})\big)z^i=\frac{\tilde{P}_{\tilde{B}}(z)}{(1-z^2)^c}\]
	for some polynomial $\tilde{P}_{\tilde{B}}(z)\in \Z[z]$. Multiplying both side of the above equation by $\omega^n$ and taking sum over $n\geq0$, we get 
	\begin{equation}\label{e6}
	\sum_{n,i\geq0}\lambda_A\big(\Ext^l_A(k,B_{n,i})\big)z^i\omega^n=\frac{P_B(z,w)}{(1-z^2)^c(1-w)^r},
	\end{equation}
	where $P_B(z,w)=\tilde{P}_{\tilde{B}}(z)(1-\omega)^{r-1} \in \Z[z,\omega]$.
	
	Now considering \ref{e3} we have an exact sequence of $A$-modules:
	\begin{equation}\label{e7}
	0\longrightarrow (C_1)_{(n,i)} \longrightarrow \Ext^l_A(k,B_{(n,i)})\longrightarrow \Ext^l_A(k,Y_{(n,i)})\longrightarrow (C_2)_{(n,i)}\longrightarrow 0,
	\end{equation}
	for each $n,i\geq0$. So the additivity of the length function gives
	\[\lambda_A \big((C_1)_{(n,i)}\big)-\lambda_A\big(\Ext^l_A(k,B_{(n,i)})\big)+ \lambda_A\big(\Ext^l_A(k,Y_{(n,i)})\big)-\lambda_A\big( (C_2)_{(n,i)}\big)=0,\]
	for each $n,i\geq0$. Multiplying both side by $z^i\omega^n$, then taking sum over $n,i\geq0$, and using \ref{e5} and \ref{e6} we obtain 
	\begin{equation}\label{e9}
	\sum_{n,i\geq0}\lambda_A\big(\Ext^l_A(k,Y_{(n,i)})\big)z^i\omega^n = \frac{P_Y(z,w)}{(1-z^2)^c(1-w)^r}
	\end{equation}
	where $P_Y(z,w)=P_{C_2}(z,\omega)-P_{C_1}(z,\omega)+P_B(z,w) \in \Z[z,\omega].$
	
	Now considering \ref{e4} we have an exact sequence of $A$-modules:
	\begin{equation}\label{e10}
	0\longrightarrow (C_3)_{(n,i)} \longrightarrow \Ext^l_A(k,Y_{(n,i)}) \longrightarrow \Ext^l_A(k,V_{(n,i)})\longrightarrow (C_4)_{(n,i)}\longrightarrow 0
	\end{equation}
	for each $n,i\geq0$. So the additivity of the length function gives
	\[\lambda_A \big((C_3)_{(n,i)}\big)-\lambda_A\big(\Ext^l_A(k,Y_{(n,i)})\big)+ \lambda_A\big(\Ext^l_A(k,V_{(n,i)})\big)-\lambda_A\big((C_4)_{(n,i)}\big)=0,\]
	for each $n,i\geq0$. Multipling both side by $z^i\omega^n$, then taking sum over $n,i\geq0$, and using \ref{e5} and \ref{e9} we obtain 
	\begin{equation}\label{e11}
	\sum_{n,i\geq0}\lambda_A\big(\Ext^l_A(k,V_{(n,i)})\big)z^i\omega^n = \frac{P_V(z,w)}{(1-z^2)^c(1-w)^r},
	\end{equation}
	where $P_V(z,w)=P_{C_4}(z,\omega)-P_{C_3}(z,\omega)+P_Y(z,w) \in \Z[z,\omega].$
	Therefore it follows that
	 \[ \lambda_A\bigg(\Ext^l_A\big(k,\Ext^{2i}_A(M,N/{I^nN})\big)\bigg)~ \text{and}~\lambda_A\bigg(\Ext^l_A\big(k,\Ext^{2i+1}_A(M,N/{I^nN})\big)\bigg)\]
	are given by polynomial in $n,i$ with rational coefficients for all sufficiently large $(n,i)$.
\end{proof}

\begin{remark}
	By Theorem~\ref{t2}~ we have for a fixed integer $l$ and for every fixed $t=0,1$  
	$$ \lambda_A\bigg( \Ext^l_A\big(k,\Ext^{2i+t}_A(M,N/{I^nN})\big) \bigg)=f(n,i)~ \text{for all}~ (n,i)\gg0
	,$$ 
	where $f(n,i)\in \mathbb{Q}[n,i]$.
	
	 As $\bigoplus_{n,i\geq 0}\Ext^l_A\big(k,\Ext^{2i+t}_A(M,N/{I^nN})\big)$ is not ~\fg~ over any Noetherian ring $R$, the assertion $\lambda_A\bigg( \Ext^l_A\big(k,\Ext^{2i+t}_A(M,N/{I^nN})\big) \bigg)=f(n,i)$ for all $(n,i)\gg 0$ does not imply that either $f(n,i)>0$ for $(n,i)\gg0$ or $f(n,i)=0$ for $(n,i)\gg0$.
\end{remark}

We now give
\begin{proof}[Proof of Theorem \ref{growth-bass}]
	We note that $A_\p$ is a local complete intersection ring. By taking a completion of $A_\p$ \wrt \ $\p A_\p$ the result follows from Theorem~\ref{t2}. 
\end{proof}

\section*{Acknowledgements}
The first author would like to thank UGC, MHRD, Govt.\,of India. for providing financial support for this study.

\end{document}